\journal{ Computers \& Mathematics with Applications}
\newtheorem{theorem}{Theorem}[section]
\newcommand{\IR}{{\mathbb{R}}}
\newcommand\figcaption{\def\@captype{figure}\caption}
\newcommand\tabcaption{\def\@captype{table}\caption}
\newcommand{\tn}[1]{\ \textnormal{#1}\ } 
\newcommand{\bmt}{\left[ \begin{array}{ccccccccccccccccccccccccccccccccccccc}}
\newcommand{\emt}{\end{array}\right]}
\newcommand{\bean}{\begin{eqnarray*}}
\newcommand{\eean}{\end{eqnarray*}}
\newcommand{\bea}{\begin{eqnarray}}
\newcommand{\eea}{\end{eqnarray}}
\newcommand{\eq}{\begin{equation}\begin{array}{lllllllll}}
\newcommand{\ee}{\end{array}\end{equation}}
\newcommand{\eqn}{\begin{equation*}\begin{array}{lllllllll}}
\newcommand{\een}{\end{array}\end{equation*}}
\def\x{\xi}
\def\b{\beta}
\def\g{\gamma}
\def\p{\phi}
\def\f{{\bf f}}
\begin{document}

\begin{frontmatter}

\title{Efficient Time Domain Decomposition Algorithms for\\ Parabolic PDE-Constrained Optimization Problems \tnoteref{t1}}
\tnotetext[t1]{This project was supported in part by the grant NSF-DMS 1522672 of the United States.}

\author[Liu]{Jun Liu\corref{mycorrespondingauthor}}
\ead{juliu@siue.edu}
\author[Wang]{Zhu Wang}
 \ead{wangzhu@math.sc.edu}
\address[Liu]{Department of Mathematics and Statistics, Southern Illinois University Edwardsville, Edwardsville, IL 62025, USA.}
\address[Wang]{Department of Mathematics, University of South Carolina,
Columbia, SC 29208, USA.}
\cortext[mycorrespondingauthor]{Corresponding author}

\begin{abstract}
Optimization with time-dependent partial differential equations (PDEs) as constraints {appears} in many science and engineering applications. 
The associated first-order necessary optimality system consists of one forward and one backward time-dependent PDE coupled with optimality conditions. 
An optimization process by using the one-shot method determines the optimal control, state and adjoint state at once, 
with the cost of solving a large scale, fully discrete optimality system. 
Hence, such {a} one-shot method could easily become computationally prohibitive when the time span is long or time step is small. 
To overcome this difficulty, we propose several time domain decomposition algorithms for improving the {computational efficiency of the one-shot method}. 
In these algorithms, the optimality system is split into many small subsystems over a much smaller time interval, which are coupled by appropriate continuity matching conditions. 
Both one-level and two-level multiplicative and additive Schwarz algorithms are developed for iteratively solving the decomposed subsystems in parallel.
In particular, the convergence of the one-level, non-overlapping algorithms is proved.
The effectiveness of our proposed algorithms is demonstrated by both 1D and 2D numerical experiments,
where the developed two-level algorithms {show convergence rates that are scalable} with respect to the number of subdomains.
\end{abstract}

\begin{keyword}
PDE-constrained optimization 
\sep forward-and-backward PDE system
\sep time domain decomposition algorithm
\sep Schwarz algorithms  
\sep preconditioners
\sep coarse space correction
\end{keyword}

\end{frontmatter}


\section{Introduction}\label{s_introduction}
Many scientific and engineering applications involve the optimization of systems governed by time-dependent partial differential equations (PDEs) \cite{Lorenz2003}, such as
the control and optimization of flows \cite{gunzburger2003perspectives}, the optimal {quenching for solidification in molds},
and the {design of airfoils to minimize their} drag subject to a minimal lift.
One way {to treat} such constrained optimization problems is to construct an unconstrained optimization problem through 
the Lagrange multiplier method \cite{hinze2008optimization}. 
By employing the optimize-then-discretize approach,
the corresponding first-order necessary optimality system is often given by a forward-and-backward in time PDE system. In such a PDE system, the state equations are posed forward with initial conditions, the adjoint equations are posed backward with terminal conditions at the final time, and they are coupled by optimality conditions.

The optimal state and control variables can be directly determined from the discrete optimality system. 
This approach is referred to as the one-shot method. 
However, since the system has twice as many unknowns as the state equation and is fully discrete in both space and time, the one-shot method needs to solve a large scale system. 
Therefore, sensitivity- and adjoint-based iterative optimization methods have been developed \cite{gunzburger2003perspectives}, which solve the state equations and adjoint state equations alternatively so that the computational burden at each optimization iteration is affordable. 
Nevertheless, the one-shot method is still very appealing, in particular, for convex problems, 
because a single solving process would be sufficient to obtain the optimal states, adjoint states, and optimal control all together, which does not require any  {inherently sequential} optimization iterations. 
In order to improve its overall efficiency in solving large-scale PDE-constrained optimization, 
many efficient direct/iterative linear/nonlinear solvers have been proposed in the framework of one-shot method, 
see for example \cite{Biros_2005a,Biros_2005b,Prudencio_2006,Barker_2010,Cai2012,Borzi2012,Axelsson_2016,Yang_2016} and the reference therein. 
It was shown in \cite{Borzi2012} that, when the multigrid method is used, coarsening in space only (semi-coarsening) yields better convergence than coarsening in both space and time. 
Thus, it is quite natural to investigate numerical remedies, such as time parallel computing, that could relieve the high computational cost due to the time discretization. 

Parallel-in-time methods have been investigated for evolutionary problems over the last four decades. 
Their central idea is to distribute a tremendous computational task into many small parts, which can be executed by multiple processors simultaneously.  
Since Nievergelt proposed the first time decomposition algorithm for finding the parallel solutions of evolutionary differential equations {\cite{nievergelt1964parallel}},
the methods of time-parallel time integration have been extensively expanded. 
The reader is referred to \cite{gander50yrs} for a survey of these methods.
For the forward-backward PDEs, a time parallel iterative method was studied in \cite{Heinkenschloss2005}, which is based on a multiple shooting reformulation of the linear quadratic optimal control problems. 
A domain decomposition in time algorithm was investigated in \cite{barkerdomain}, which follows the discretize-then-optimize approach and employs the additive Schwarz preconditioner in time based on the discrete system. 
More recently, some preliminary convergence analysis {of} Schwarz methods in time for linear parabolic control problems was conducted in \cite{Gander_2016,Felix_2016}, 
where an appealing mesh-independent convergence rate is proved under some restrictive technical assumptions on the discretized spatial operators. 
We note that domain decomposition algorithms in space have been well established (\cite{smith1996domain,toselli2005domain,mathew2008domain,Cai2006,Heinkenschloss_2007,Dolean2015}) in solving large systems. 
But time domain decomposition methods have been less thoroughly investigated in the context of forward-and-backward 
optimality system. 

Therefore, we put forth a time domain decomposition framework for the forward-and-backward PDEs {appearing} in optimality systems.   
The proposed approach {has its} roots in an equivalent hybrid formulation of the continuous system. 
Four different types of iterative domain decomposition algorithms are designed to compute numerical solutions by multiple processors in parallel: 
multiplicative and additive Schwarz algorithms with or without overlap. 
 {Since the one-level domain decomposition algorithms would require more iterations for convergence as the number of subdomains increases, to obtain better performance with high degree of parallelism, it is often necessary to use two-level domain decomposition algorithms equipped with a coarse grid correction procedure \cite{smith1996domain,quarteroni1999domain,toselli2005domain,Yang_2016,Deng_2015}. 
Therefore, both one-level and two-level algorithms are numerically investigated in this paper. 
In particular, two-level domain decomposition algorithms 
based on some recently developed coarse space correction techniques are implemented \cite{Martin2012,Gander2014}. }
Although it has been pointed out in \cite{barkerdomain,Felix_2016} that two-level algorithms are needed to 
achieve better scalable convergence rates than one-level algorithms, 
there is very few work discussing such two-level time domain decomposition algorithms in the setting of 
forward-and-backward optimality system.
For instance, some two-level space-time parallel domain decomposition preconditioners {were very recently} proposed in \cite{Deng_2015,Yang_2016}, 
where the full discretizations are based on first-order accurate backward Euler scheme in time and the convergence analysis of the algorithms was not given.
{In contrast to the aforementioned} methods, our proposed time domain decomposition algorithms are based on a second-order accurate leapfrog scheme in time
and are proved to be convergent as stand-alone solvers.
Moreover, the construction of our coarse grid correction is very different from the geometrical {arguments} used in \cite{Cai2012,Deng_2015,Yang_2016,Kong_2016}.
The reported comprehensive numerical comparison of four types of domain decomposition algorithms (both one-level and two-level) as stand-alone solvers and preconditioners is another major contribution of this work. 
In particular, compared to overlapping methods, two-level non-overlapping domain decomposition methods \cite{Carvalho_2001,Giraud_2003} have been rarely discussed in the literature. 
Our current work contributes to extend the applicability of non-overlapping methods. 
{It is worth mentioning that, 
although we solve each forward-backward optimality subsystem with either sparse direct solver or existing multigrid solver in this paper, other iterative methods could be applied to find solutions in each time subinterval. Implementation techniques such as checkpointing \cite{griewank2000algorithm} could be further used to trade memory for run-time by reversing only parts of a program at a time.}

The rest of the paper is organized as follows. 
In Section \ref{sec: opt}, we consider a typical linear parabolic PDE constrained optimization problem and present its 
forward-and-backward optimality PDE system and the corresponding finite difference discretization. 
In Section \ref{sec: tdd}, a hybrid formulation with two subdomains is proved to motivate our time domain decomposition algorithms. 
Based on the developed hybrid formulation, four different domain decomposition algorithms are proposed and some convergence analyses are given in Section \ref{sec: itm};  
 {two-level algorithms are studied in Section \ref{sec: two}.}
In Section \ref{sec: num}, some comprehensive numerical experiments are conducted to demonstrate the effectiveness of our proposed domain decomposition algorithms
as stand-alone solvers and preconditioners, respectively. 
Finally, some  {concluding} remarks are drawn in the last section.
\section{Parabolic PDE-Constrained Optimization and its Discretization}\label{sec: opt}
In this section, we exemplify our proposed algorithms through discussing a typical distributed control problem.
Let $\Omega$ be a bounded subset of $\IR^d$ ($1\le d\le 3$) with Lipschitz boundary $\Gamma:=\partial\Omega$.
{To facilitate the use of finite difference discretizations, we mainly choose the unit square domain, i.e., $\Omega=(0,1)^2$.
However, our following proposed domain decomposition algorithms are directly applicable to domains with general geometric shape, 
if appropriate discretizations, such as finite element method, are used in space. 
}
Define $Q=\Omega\times (0,T)$ and $\Sigma=\Gamma\times (0,T)$ for a given finite period of time $T>0$. 
We consider the {following} unconstrained optimal control problem \cite{Lions1971,fredi2010optimal} for minimizing a tracking-type quadratic cost functional
\eq \label{goal}
\quad J(y,u)=\frac{1}{2} \|y-g\|^2_{L^2(Q)}+\frac{\gamma}{2}\|u\|^2_{L^2(Q)}
\ee
subject to a linear parabolic PDE system
\eq \label{state}
\left\{\begin{array}{rlr}
-\partial_t y + \Delta y=&f+u\ &\tn{in}\ Q, \\
y =&0\ &\tn{on}\ \Sigma,\\
y(\cdot,0)=&y_0\ &\tn{in}\ \Omega,
\end{array}\right.
\ee
where $g\in L^2(Q)$ is the desired tracking trajectory, $u\in U:=L^2(Q)$ is the distributed control function, 
$\gamma>0$ represents either the weight on the cost of control or the Tikhonov regularization parameter,
$f\in L^2(Q)$ is the source term, and the initial condition $y_0\in H_0^1(\Omega)$. 
For simplicity, we also assume no constraints on the control $u$. 
The existence and uniqueness of the solution to the above problem 
is well established \cite{Lions1971,fredi2010optimal}.

By defining a Lagrange functional and making use of the strict convexity of the objective functional $J$, 
the optimal solution pair $(y, u)$ to (\ref{goal})-(\ref{state}) can be uniquely determined by the first-order necessary condition. It leads to the following PDE system:
%
\eq \label{opt1A}
\left\{\begin{array}{rl}
-\partial_t y + \Delta y-p/\gamma=&f\ \tn{in} Q,\quad \\
y=&0\ \tn{on} \Sigma,\quad
y(\cdot,0)=y_0 \tn{in}\ \Omega,\\
\partial_t p+ \Delta p+ y=& g\ \tn{in} Q,\quad \\
p=&0\ \tn{on} \Sigma, \quad
p(\cdot,T)=0 \tn{in}\ \Omega,
\end{array}\right.
\ee
where the state $y$ evolves forward in time and adjoint state $p$ marches backward.
The optimal control $u$ can be derived from the optimality condition $\gamma u=p$ in $Q$.
The fact that the state $y$ and the adjoint state $p$ are marching in opposite orientations represents the main challenge for solving (\ref{opt1A}) numerically, 
since we have to resolve all time steps simultaneously in computation, which inevitably results in a huge sparse system of linear equations upon a full discretization.

Following our previous work \cite{LX2015a,Liu_2016}, we use a second-order leapfrog central finite difference scheme to obtain a full discretization of the optimality system (\ref{opt1A}).
Here, we briefly describe the scheme for completeness and refer interested readers to \cite{LX2015a} for more details. 
The time interval $[0,T]$ is uniformly divided into $N$ sub-intervals by the points 
$0=t_0<t_1<\cdots<t_N=T$ and time step size $\tau=T/N$. 
The space domain $\Omega=(0,1)^2$ is partitioned uniformly by grid points $(\xi_i, \zeta_j)$ with $0\leq i, j \leq M$, where $\xi_i=ih$, $\zeta_j=jh$, and $h=1/M$.  
Denote the partitioned space domain by $\Omega_h$. 
For any grid function $Z(\cdot,\cdot)$ on $\Omega_h$, we define the discrete gradient ($\nabla_h$)
 {by the forward difference and 
define the discrete Laplacian ($\Delta_h$) by a 5-point stencil central difference.}
The full discretization yields  
\begin{align}
&-\frac{Y^{n+1}-Y^{n-1}}{2\tau}+\Delta_h Y^n-P^n/\gamma=F^n,\quad n=1,2,\cdots,N-2
,  \label{FDEq1}\\
&\frac{P^{n+1}-P^{n-1}}{2\tau}+\Delta_h P^n+   Y^n=G^n,\quad n=2,3,\cdots,N-1
,  \label{FDEq2}
\end{align}
where $(Y^n)_{ij}$ and $(P^n)_{ij}$ are the discrete approximation
of $y(\xi_i,\zeta_j,t_n)$ and $p(\xi_i,\zeta_j,t_n)$, respectively, for $i, j= 1, \ldots M-1$. 
Similar notations are used for $F^n$ and $G^n$.
 {The Dirichlet boundary conditions are imposed at the grid points with $i=1,M-1$ or $j=1,M-1$ through the discrete Laplacian terms.}
Also notice that $Y^0$ and $P^N$ are obtained directly from the given initial conditions $y(\cdot,0)=y_0$ and $p(\cdot,T)=0$.
To close the above linear system, we impose two additional equations 
derived from approximating \eqref{opt1A} at the last time step (aligned with $n=N-1$ in (\ref{FDEq1}) and $n=1$ in (\ref{FDEq2}) ) by a second-order, one-sided BDF2 scheme \cite{LeVeque2007}, i.e., 
\begin{align}
-\frac{ {-Y^{N-3}+4Y^{N-2}-3Y^{N-1}} }{2\tau}+\Delta_h Y^{N-1}-P^{N-1}/\gamma&=F^{N-1} ,
\label{FDEq12}\\
\frac{3P^{1}- 4P^{2}+P^3}{ 2\tau}+\Delta_h P^1+   Y^1&=G^1
.    \label{FDEq22}
\end{align}
For the purpose of obtaining better performance in our domain decomposition algorithms, we have slightly modified the original scheme in \cite{LX2015a}
by {no longer} treating $Y^N$ and $P^0$ as unknowns. 
Otherwise, according to our numerical tests, the given initial conditions $y(\cdot,0)=y_0$ and $p(\cdot,T)=0$ cannot be easily shifted to the right-hand-side, which seems to introduce large approximation errors on the subdomain interfaces and hence {causes} deteriorated convergence rates for the 2-level algorithms based on coarse grid residual correction.

The above scheme (\ref{FDEq1}-\ref{FDEq22}) in a matrix form is 
a two-by-two block, structured, sparse linear system, whose coefficient matrix is non-symmetric and indefinite, as follows.
\begin{align} \label{linsys}
 L_h w_h:=\bmt
 A_h&-I_h/\gamma\\
I_h&D_h
\emt 
 \bmt y_h\\ p_h \emt=\bmt  f_h\\ g_h \emt =:b_h,
\end{align}
where
\begin{align*} 
A_h&=\bmt
  \Delta_h &-{I}/{2\tau} & \cdots &0 &0&0\\
{I}/{2\tau} &  \Delta_h & -{I}/{2\tau} & \cdots &0&0\\
0&{I}/{2\tau} &  \Delta_h & -{I}/{2\tau} & \cdots &0\\
0& 0&\ddots &\ddots &\ddots &0\\
0& 0&\ddots &\ddots &\ddots &0\\
 0&0&\cdots &{I}/{2\tau} & \Delta_h & -{I}/{2\tau}\\
0&0&\cdots & -I/2\tau & {4I}/{2\tau} &( \Delta_h-{3I}/{2\tau}) 
\emt,\\ 
D_h&=\bmt
(\Delta_h-{3I}/{2\tau})&{4I}/{2\tau}&{-I}/{2\tau} &0&\cdots &0 \\
-{I}/{2\tau} &  \Delta_h & {I}/{2\tau} &0& \cdots &0 \\
0&-{I}/{2\tau} &  \Delta_h & {I}/{2\tau} &\ddots & \vdots \\
0& 0&\ddots &\ddots &\ddots &0\\
0&0&\ddots &-{I}/{2\tau} &\Delta_h &{I}/{2\tau} \\
0& 0&\cdots & 0&-{I}/{2\tau} & \Delta_h 
\emt, \\
f_h&=\bmt  f^1- {y_{0,h}}/{2\tau} \\f^2 \\ \vdots \\ f^{N-1} \emt,
g_h=\bmt g^1 \\g^2 \\ \vdots \\ g^{N-1} \emt,
y_h=\bmt  y^1 \\y^2 \\ \vdots \\ y^{N-1} \emt,\quad \mbox{and}\quad
p_h=\bmt  p^1 \\p^2 \\ \vdots \\ p^{N-1}  \emt.
\end{align*} 
Here $I$ and $I_h$ are identity matrices of appropriate size
and the vectors $y_{0,h}$, $f^n$, $g^n$, $y^n$ and $p^n$ are the lexicographic ordering (vectorization) of the 
corresponding function approximations on spatial grid points (in matrix form) at time step $t_n$.
Notice that the initial conditions $y(\cdot,0)=y_0$ and $p(\cdot,T)=0$ are already shifted to the right-hand-side,
i.e., the first term of $f_h$ and the last term of $g_h$.
It is not difficult to see that the size of the above system (\ref{linsys}) will become prohibitively large when $N$ or $T$ gets large enough,
which motives our proposed time domain decomposition algorithms in this paper.
In some sense,
the proposed domain decomposition algorithms in time complements very well with the developed semi-coarsening multigrid algorithm in space \cite{LX2015a}, as shown in the subsection \ref{num2D}.

\section{A Hybrid Continuous Formulation of Optimality System with Two Subdomains }\label{sec: tdd}
In this section, we assume the time interval $[0,T]$ is decomposed into two subdomains, $R_1= (0, T_1^{r})$ and $R_2= (T_2^ l, T)$, where $T_1^{r} \geq T_2^ l$. 
When $T_1^{r} = T_2^ l$, there is no overlapping between two subdomains; 
when $T_1^{r}>T_2^ l$, there is an overlapping region. 
Let $y_i$, $p_i$ be the local solutions on each subdomain $Q_i= \Omega \times R_i$, for $i=1, 2$. 
We introduce a hybrid formulation that is the restriction of the optimality system \eqref{opt1A} on each subdomain $Q_i$ together with the continuity matching conditions on the interfaces or regions of overlap between adjacent subdomains.
We will first show that the hybrid formulation is equivalent to the original system, i.e., the local solutions of the hybrid problem coincide the global solution restricted on each subdomain,  
which is not surprising but very fundamental to justify and analyze the convergence of any domain decomposition algorithms.
In the next section, we will show that time domain decomposition algorithms
based on this hybrid formulation lead to a solution convergent to the solution of original system.

\begin{theorem} \label{thm1}
Let $y(x, t), p(x, t)\in W(0, T; Q):= \{y\in H^{1, 0}(Q), \partial_t y\in L^2\left(0, T; H^1(\Omega)^*\right)\}$ be the solutions to optimality system \eqref{opt1A}. 
Suppose $y_1(x, t)$, $p_1(x, t)$ and $y_2(x, t)$, $p_2(x, t)$ 
solve the following hybrid systems
\eq \label{tdd_2s_1}
\left\{\begin{array}{rl}
-\partial_t y_1 + \Delta y_1-p_1/\gamma=&f\ \tn{in} Q_1,\quad \\
y_1=&0\ \tn{on} \Sigma,\quad
y_1(\cdot,0)=y_0 \tn{in}\ \Omega,\\
\partial_t p_1+ \Delta p_1+ y_1=& g\ \tn{in} Q_1,\quad \\
p_1=&0\ \tn{on} \Sigma, \quad
p_1(\cdot,T_1^{r})=p_2(\cdot,T_1^{r}) \tn{in}\ \Omega,
\end{array}\right.
\ee
and
\eq \label{tdd_2s_2}
\left\{\begin{array}{rl}
-\partial_t y_2 + \Delta y_2 - p_2/\gamma=&f\ \tn{in} Q_2,\quad \\
y_2 =&0\ \tn{on} \Sigma,\quad
y_2(\cdot,T_2^ l)=y_1(\cdot,T_2^ l) \tn{in}\ \Omega,\\
\partial_t p_2+ \Delta p_2+ y_2=& g\ \tn{in} Q_2,\quad \\
p_2=&0\ \tn{on} \Sigma, \quad
p_2(\cdot,T)=0 \tn{in}\ \Omega,
\end{array}\right.
\ee
which are coupled through the continuity matching conditions.
Then, there hold
\eq \label{tdd_2s_same}
\left\{\begin{array}{rl}
y_1=y \tn{and} p_1 = p&  \tn{in} Q_1,\quad \\
y_2=y \tn{and} p_2 = p & \tn{in} Q_2. \quad 
\end{array}\right.
\ee
\end{theorem}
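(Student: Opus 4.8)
The plan is to reduce everything to the unique solvability of the monolithic optimality system \eqref{opt1A}, which is already available from the strict convexity of $J$ recalled above. First I would record the easy half: the restrictions $y|_{Q_i}$ and $p|_{Q_i}$ of the global solution automatically satisfy \eqref{tdd_2s_1} and \eqref{tdd_2s_2}, because every equation there is just the corresponding equation of \eqref{opt1A} read on the subcylinder $Q_i$, and the matching conditions $p_1(\cdot,T_1^{r})=p_2(\cdot,T_1^{r})$, $y_2(\cdot,T_2^{l})=y_1(\cdot,T_2^{l})$ hold trivially once both sides are interpreted as traces of the same $p$ and $y$. Hence $(y|_{Q_1},p|_{Q_1},y|_{Q_2},p|_{Q_2})$ is one solution of the coupled hybrid problem, and the real content of the theorem is that it is the only one.

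To get uniqueness I would first show that any hybrid solution $(y_1,p_1,y_2,p_2)$ has $y_1=y_2$ and $p_1=p_2$ on the overlap $W:=\Omega\times(T_2^{l},T_1^{r})$. On $W$ both pairs satisfy the same interior equations, vanish on $\Gamma$, and---by the two matching conditions---share the same trace of the state at $t=T_2^{l}$ and the same trace of the adjoint at $t=T_1^{r}$. Thus $\delta y:=y_1-y_2$ and $\delta p:=p_1-p_2$ solve on $W$ the homogeneous forward-backward system with $\delta y(\cdot,T_2^{l})=0$ and $\delta p(\cdot,T_1^{r})=0$. Testing the $\delta y$-equation with $\delta p$ and the $\delta p$-equation with $\delta y$, subtracting and integrating over $W$, the Laplacian terms cancel by Green's formula (using $\delta y,\delta p\in H_0^1(\Omega)$ in space) and the time-boundary term $\int_\Omega\delta y\,\delta p$ vanishes at $t=T_2^{l}$ and $t=T_1^{r}$ by the two homogeneous conditions; what remains is $\tfrac1\gamma\|\delta p\|_{L^2(W)}^2+\|\delta y\|_{L^2(W)}^2=0$, so $y_1=y_2$ and $p_1=p_2$ on $W$. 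In the non-overlapping case $T_1^{r}=T_2^{l}=:T^{*}$ there is nothing to prove on $W$ (which is empty), and the interface agreement $y_1(\cdot,T^{*})=y_2(\cdot,T^{*})$, $p_1(\cdot,T^{*})=p_2(\cdot,T^{*})$ needed below is exactly what the matching conditions assert.

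Because the two local solutions coincide on the overlap (respectively meet continuously at the interface), I would then glue them: $\tilde y:=y_1$ on $Q_1$ and $\tilde y:=y_2$ on $Q_2$, and likewise $\tilde p$. Each of $\tilde y,\tilde p$ is continuous in time with values in $L^2(\Omega)$ and has no jump across $t=T_1^{r}$ (nor across $T_2^{l}$), so its distributional time derivative is the concatenation of the local ones and lies in $L^2(0,T;H^1(\Omega)^{*})$; hence $\tilde y,\tilde p\in W(0,T;Q)$ and the pair solves \eqref{opt1A} on all of $Q$ with the correct initial/terminal and boundary conditions. By uniqueness for \eqref{opt1A}, $\tilde y=y$ and $\tilde p=p$ on $Q$, which is precisely \eqref{tdd_2s_same}.

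I expect the main obstacle to be the function-space bookkeeping in the gluing step---verifying rigorously that the concatenation lands in $W(0,T;Q)$ and satisfies \eqref{opt1A} in the variational sense across the interface---together with the justification of the integration-by-parts in time used in the energy estimate, i.e. the Lions--Magenes identity $\frac{d}{dt}\int_\Omega\delta y\,\delta p=\langle\partial_t\delta y,\delta p\rangle+\langle\partial_t\delta p,\delta y\rangle$ for functions in $W(0,T;Q)$ and the continuity of their traces at interior times. Once these technical points are in hand, the interior computations and the energy cancellations are routine.
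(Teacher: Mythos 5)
Your proposal is correct and follows essentially the same route as the paper: the easy direction by restriction, the overlap handled by the identical duality/energy pairing (test the $y$-difference equation with the $p$-difference and vice versa, cancel the Laplacian and time-boundary terms, conclude $\|\delta y\|_{L^2}^2+\tfrac1\gamma\|\delta p\|_{L^2}^2=0$), and then gluing plus uniqueness of \eqref{opt1A}. The only cosmetic difference is that the paper carries out the gluing by explicitly summing the local weak formulations so the interface terms cancel, whereas you phrase it as concatenation of distributional time derivatives; these are the same argument.
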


\begin{proof}
If $y$ and $p$ are solutions of the optimality system \eqref{opt1A}, $y_i= y$  and $p_i= p$ on $Q_i$, then $y_1$, $y_2$ and $p_1$, $p_2$ will obviously satisfy the hybrid formulation \eqref{tdd_2s_1}-\eqref{tdd_2s_2} by the construction. 

%

To prove the converse, we suppose that $y_1$, $y_2$ and $p_1$, $p_2$ solve the hybrid formulation \eqref{tdd_2s_1}-\eqref{tdd_2s_2} and will show that $y$ and $p$ satisfy \eqref{opt1A} if $y= y_i$ and $p= p_i$ on $Q_i$. 
In particular, we need to show that $(y, p)\in W(0, T; Q)$ satisfies the weak formulation of the optimality system \eqref{opt1A}: 
\begin{eqnarray}
\iint_Qy v_t - \iint_Q \nabla y \cdot \nabla v - \frac{1}{\gamma}\iint_Q p v &=&  \iint_Q fv + \int_{\Omega} y(\cdot, T) v(\cdot, T)  \nonumber \\ 
&-& \int_{\Omega} y(\cdot, 0) v(\cdot, 0),\quad \forall\, v\in W^{1, 1}_2(Q). \label{eq:hyb1} \\
-\iint_Q p w_t - \iint_Q \nabla p \cdot \nabla w + \iint_Q y w &=&  \iint_Q g w + \int_{\Omega} p(\cdot, 0) w(\cdot, 0)\nonumber \\  
&-& \int_{\Omega} p(\cdot, T) w(\cdot, T),\quad \forall\, w\in W^{1, 1}_2(Q). \label{eq:hyb2}
\end{eqnarray}

(I) We first consider the non-overlapping decomposition in time domain, i.e., $T_1^{r} = T_2^ l$.  
By testing the equations of $y_1 \in W(0, T_1^r; Q_1)$ and $y_2\in W(T_2^ l, T; Q_2)$, \eqref{tdd_2s_1} and \eqref{tdd_2s_2}, by $v$ respectively, we obtain 
\begin{eqnarray}
\iint_{Q_1} y_1 v_t - \iint_{Q_1} \nabla y_1 \cdot \nabla v - \frac{1}{\gamma}\iint_{Q_1} p_1 v &=&  \iint_{Q_1} fv + \int_{\Omega} y_1(\cdot, T_1^{r}) v(\cdot, T_1^{r})  \nonumber \\ 
&-& \int_{\Omega} y_1(\cdot, 0) v(\cdot, 0),\quad \forall\, v\in W^{1, 1}_2(Q), \label{eq:hyb1_a}\\ 
\iint_{Q_2} y_2 v_t - \iint_{Q_2} \nabla y_2 \cdot \nabla v - \frac{1}{\gamma}\iint_{Q_2} p_2 v &=&  \iint_{Q_2} fv + \int_{\Omega} y_2(\cdot, T) v(\cdot, T)  \nonumber \\ 
&-& \int_{\Omega} y_2(\cdot, T_1^{r}) v(\cdot, T_1^{r}),\,\, \forall\, v\in W^{1, 1}_2(Q). \label{eq:hyb1_b}
\end{eqnarray}
Considering $y= y_1$ in $Q_1$, $y= y_2$ in $Q_2$, $y_1(\cdot, T_1^{r}) = y_2(\cdot, T_1^{r})$ and summing \eqref{eq:hyb1_a} and \eqref{eq:hyb1_b} together, we get a weak formulation for equations of $y$ that is equivalent to \eqref{eq:hyb1}.
An analogous argument leads to, given $p= p_1$ in $Q_1$ and $p= p_2$ in $Q_2$, the weak formulation for equations of $p$ in the coupled system is equivalent to \eqref{eq:hyb2}. 


(II) We then consider the overlapping decomposition in time domain, i.e., $T_1^{r} > T_2^ l$. 
Denote by $\widetilde{y}= y_1-y_2$ and $\widetilde{p}= p_1-p_2$ on $Q^*= Q_1 \bigcap Q_2$, then 
\eq \label{opt_overlapping1}
\left\{\begin{array}{rl}
-\partial_t \widetilde{y} + \Delta \widetilde{y} - \widetilde{p}/\gamma=&0\ \tn{in} Q^*,\quad \\
\widetilde{y}=&0\ \tn{on} \Sigma,\quad
\widetilde{y}(\cdot, T_2^ l)= 0 \tn{in}\ \Omega,\\
\partial_t \widetilde{p}+ \Delta \widetilde{p}+ \widetilde{y} =& 0\ \tn{in} Q^*,\quad \\
\widetilde{p} =&0\ \tn{on} \Sigma, \quad
\widetilde{p} (\cdot,T_1^{r})=0 \tn{in}\ \Omega.
\end{array}\right.
\ee
We first show $\widetilde{y}, \widetilde{p} \in W(T_2^ l, T_1^r; Q^*)$ is identically zero. 
In the equation of $\widetilde{y}$, taking $\widetilde{p}$ as the test function, we have 
\begin{equation}
-\iint_{Q^*}\partial_t \widetilde{y}\, \widetilde{p}+ \iint_{Q^*}\Delta \widetilde{y}\, \widetilde{p} - \frac{1}{\gamma}\iint_{Q^*} \widetilde{p}^2 = 0.
\label{eq:opt_overlapping1_1}
\end{equation}
After integration by parts, it can be written as 
\begin{equation}
\iint_{Q^*} \widetilde{y}\, \partial_t\widetilde{p} - 
\int_{\Omega}\left[\widetilde{y}(T_1^{r})\widetilde{P}(T_1^{r}) - \widetilde{y}(T_2^ l)\widetilde{P}(T_2^ l)\right] 
- \iint_{Q^*}\nabla \widetilde{y}\,\nabla \widetilde{p} 
+ \int_{\Sigma} \frac{\partial \widetilde{y}}{\partial n} \widetilde{p} 
- \frac{1}{\gamma}\iint_{Q^*} \widetilde{p}^2 = 0.
\label{eq:opt_overlapping1_2}
\end{equation}
Analogously, we take $\widetilde{y}$ as the test function in the equation for $\widetilde{p}$ and get 
\begin{equation}
\iint_{Q^*}\partial_t \widetilde{p}\, \widetilde{y}+ \iint_{Q^*}\Delta \widetilde{p}\, \widetilde{y} + \iint_{Q^*} \widetilde{y}^2 = 0.
\label{eq:opt_overlapping1_3}
\end{equation}
Integration by parts on the second term leads to  
\begin{equation}
\iint_{Q^*} \partial_t\widetilde{p}\, \widetilde{y} - \iint_{Q^*}\nabla \widetilde{p}\, \nabla \widetilde{y} + \int_{\Sigma} \frac{\partial \widetilde{p}}{\partial n} \widetilde{y} + \iint_{Q^*} \widetilde{y}^2  = 0.
\label{eq:opt_overlapping1_4}
\end{equation}
Subtracting \eqref{eq:opt_overlapping1_2} from \eqref{eq:opt_overlapping1_4} and considering the boundary and initial conditions, we have 
\begin{equation}
\iint_{Q^*} \widetilde{y}^2 + \frac{1}{\gamma}\iint_{Q^*} \widetilde{p}^2 = 0. 
\end{equation}
It indicates that $\widetilde{y}$ and $\widetilde{p}$ are identically zero on $Q^*$, therefore, $y_1 = y_2$ and $p_1= p_2$ on $Q^*$. 

Therefore, the overlapping case can be regarded as a decomposition of two non-overlapping subdomains $\Omega\times(0, T_1^r)$ and $\Omega\times(T_1^r, T)$. 
Similar to the case (I), it is easy to show that the hybrid formulation \eqref{tdd_2s_1} and \eqref{tdd_2s_2} is equivalent to the optimality system \eqref{opt1A}. 
This completes the proof.
\end{proof}
The above hybrid formulation \eqref{tdd_2s_1}-\eqref{tdd_2s_2} can be easily extended to the multi-domain case, which motivates our iterative domain decomposition algorithms {introduced} in the next section.  
%

\section{One-level Time Domain Decomposition Algorithms with Convergence Analysis}\label{sec: itm}
To take advantage of massively parallel computers, 
we need to consider many subdomains so that the computational task for solving the original optimality system can be allocated to multiple processors and be solved in a parallel manner.
In this paper, we consider a strip decomposition in the time domain. 
In nonoverlapping algorithms, we partition the time domain $[0, T]$ into $K$ non-overlapping subdomains of equal length. 
For overlapping algorithms, we extend each nonoverlapping subdomain to its neighboring subdomains with an overlap length $\delta>0$. 
Denote by $Q_i = \Omega\times (T_i^ l, T_i^{r})$ the $i$-th subdomain. 
For the first and last subdomain, we will enforce the maximum length of {an} interval by setting $T_1^ l=0$ and $T_K^{r}=T$. 
A typical scenario of 3 overlapping subdomains is illustrated in Figure \ref{fig: domain}.
\begin{figure}[h!]
\centering
\includegraphics[width=0.618\textwidth]{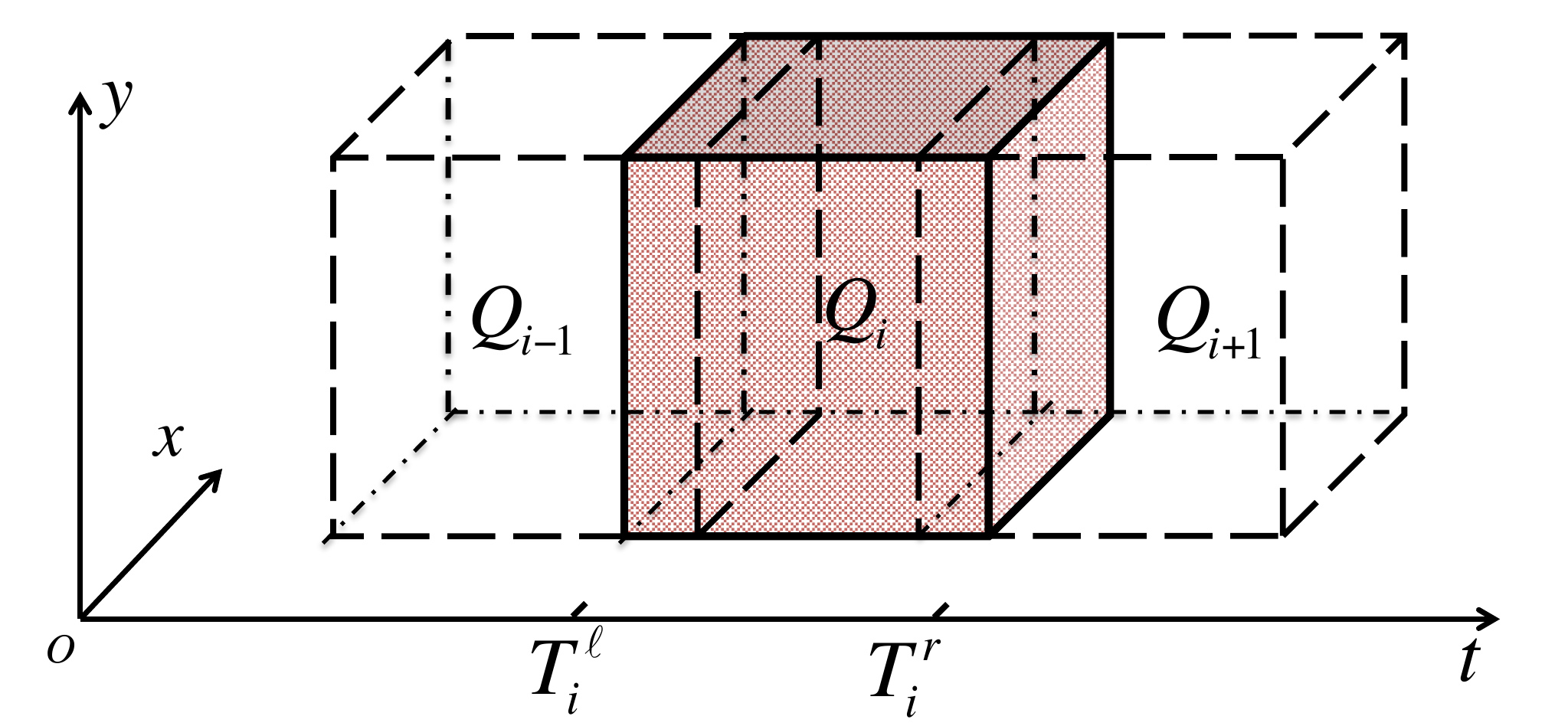}
\caption{A schematic diagram of strip domain decomposition in time.}
\label{fig: domain}
\end{figure}

For simplicity of exposition, we will present our algorithms and convergence analysis for the case with only two subdomains (i.e., $K=2$), but the generalization of our results to many subdomains are straightforward and the convergence {for multiple subdomains} will also be demonstrated by our numerical results.
Suppose the state $y_i^{k-1}$ and adjoint state $p_i^{k-1}$ on $Q_i (i=1,2)$  are given at the $(k-1)$-th iteration, 
then the new states $y_i^{k}$ and $p_i^{k}$ satisfy:
\eq \label{tdd_msch_1}
\left\{\begin{array}{rl}
-\partial_t y_1^{k} + \Delta y_1^{k} -p_1^{k}/\gamma=&f\ \tn{in} Q_1, \quad 
y_1^{k}(\cdot,T_1^l=0)=y_0 \tn{in}\ \Omega,\\
\partial_t p_1^{k}+ \Delta p_1^{k}+ y_1^{k}=& g\ \tn{in} Q_1,\quad
p_1^{k}(\cdot,T_1^r)= p_{2}^{k-1}(T_1^r)\ \tn{in}\ \Omega,
\end{array}\right.\\ \newline \\
\left\{\begin{array}{rl}
-\partial_t y_2^{k} + \Delta y_2^{k} -p_2^{k}/\gamma=&f\ \tn{in} Q_2, \quad 
y_2^{k}(\cdot,T_2^l)=y_1^{\star}(\cdot,T_2^l) \tn{in}\ \Omega,\\
\partial_t p_2^{k}+ \Delta p_2^{k}+ y_2^{k}=& g\ \tn{in} Q_2,\quad
p_2^{k}(\cdot,T_2^r=T)=0\ \tn{in}\ \Omega,
\end{array}\right.
\ee
where the original homogeneous boundary conditions $y_1=0, p_1=0, y_2=0$, and $p_2=0$ hold for all $k$.

When $\star=(k-1)$ in \eqref{tdd_msch_1}, we obtain an {\em additive} Schwarz algorithm. 
It solves the optimality system in a highly parallel manner. 
Since interface conditions come from the previous iteration, the new approximations $y_{i}^{k}$ and $p_{i}^{k}$ on each subdomain $Q_i$ can be computed concurrently. 

When $\star= k$ in \eqref{tdd_msch_1}, we get a {\em multiplicative} Schwarz algorithm. 
It solves the optimality system by sequentially updating the approximations on subdomains in a prescribed order. 
In fact, the latest interface value $y_{i-1}^{k}(\cdot, T_{i-1}^ r)$ from the preceding subdomain $Q_{i-1}$ is
immediately used in updating $y_{i}^{k}$ and $p_{i}^{k}$ on the subdomain $Q_{i}$. 
Therefore, the multiplicative algorithm runs sequentially in nature. 
As a standard strategy, its parallelizability can be greatly improved by grouping the subdomains into different colors. 
The subdomains with the same color do not intersect with each other and, thus, can be solved simultaneously. 
Indeed, since we consider the decomposition in the 1D time direction, two colors are sufficient to group all the strips. 

We will study four different domain decomposition algorithms: 
(1) additive Schwarz iterations with no overlap (ASN);
(2) multiplicative Schwarz iterations with no overlap (MSN); 
(3) additive Schwarz iterations with overlap (ASO);
(4) multiplicative Schwarz iterations with overlap (MSO).
With comprehensive numerical tests, we will compare the convergence performance of each of them (1-level and 2-level) as stand-alone iterative solver and preconditioner of GMRES Krylov subspace solver, respectively.

Next, we analyze the convergence of the non-overlapping algorithms, ASN and MSN, respectively. 
In the following, we assume the case with two subdomains.
Denote $T_1=T_2^l=T_1^r\in (0,T)$ and the global time interval $[0,T]$ is then divided into two nonoverlapping sub-intervals $[0,T_1]$ and $[T_1,T]$.
Upon a semi-discretization of the Laplacian operator with a second-order central difference scheme in space,
the ASN algorithm based on (\ref{tdd_msch_1}) iterates concurrently according to the following two coupled ODEs
\eq \label{tdd_msch_ASN1}
\left\{\begin{array}{rl}
-\partial_t y_1^{k} + \Delta_h y_1^{k} -p_1^{k}/\gamma=&f_1\ \tn{in} Q_1, \quad 
y_1^{k}(0)=y_0 \tn{in}\ \Omega,\\
\partial_t p_1^{k}+ \Delta_h p_1^{k}+ y_1^{k}=& g_1\ \tn{in} Q_1,\quad
p_1^{k}(T_1)= p_{2}^{k-1}(T_1)\ \tn{in}\ \Omega,
\end{array}\right.
\ee
\eq \label{tdd_msch_ASN2}
\left\{\begin{array}{rl}
-\partial_t y_2^{k} + \Delta_h y_2^{k} -p_2^{k}/\gamma=&f_2\ \tn{in} Q_2, \quad 
y_2^{k}(T_1)=y_1^{k-1}(T_1) \tn{in}\ \Omega,\\
\partial_t p_2^{k}+ \Delta_h p_2^{k}+ y_2^{k}=& g_2\ \tn{in} Q_2,\quad
p_2^{k}(T)=0\ \tn{in}\ \Omega,
\end{array}\right.
\ee
where  {$y_1^{k}(t), y_2^{k}(t), p_1^{k}(t), p_2^{k}(t), f_1(t), f_2(t), g_1(t), g_2(t)$ are the corresponding spatially discretized vector functions} and 
$\Delta_h$ is a discrete Laplacian operator associated with the Dirichlet boundary conditions. 
 {It is worthwhile to point out that, although we use the discrete Laplacian with a 5-point stencil central difference in the full discrete scheme (\ref{FDEq1}-\ref{FDEq22}), our following analysis also applies to {other discretization} of the Laplacian operator}. 
The initial and ending vectors $y_1^{k-1}(T_1), p_{2}^{k-1}(T_1)$ of $(k-1)$-th iteration are assumed to be given or already obtained.
In the following theorem we proved the convergence of the above semi-discretized ASN algorithm, by treating time in a continuous manner.
\begin{theorem}\label{thmASN}
 The ASN algorithm given by the iterations (\ref{tdd_msch_ASN1}-\ref{tdd_msch_ASN2}) is convergent.
\end{theorem}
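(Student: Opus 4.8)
The plan is to reduce the statement to the spectral radius of a small operator acting only on the two interface unknowns, to show that operator is a strict contraction by diagonalizing $\Delta_h$, and then to verify the resulting scalar inequality mode by mode.

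First I would pass to the error. Subtracting from $(y_i^k,p_i^k)$ the unique solution $(y,p)$ of the semidiscrete optimality system, the errors $e_{y,i}^k:=y_i^k-y$ and $e_{p,i}^k:=p_i^k-p$ satisfy (\ref{tdd_msch_ASN1})--(\ref{tdd_msch_ASN2}) with $f_i=g_i=0$, $y_0=0$, and with transmission data taken from the previous iterate: on $Q_1$ the local two-point problem has $e_{y,1}^k(0)=0$ and $e_{p,1}^k(T_1)=e_{p,2}^{k-1}(T_1)$, on $Q_2$ it has $e_{y,2}^k(T_1)=e_{y,1}^{k-1}(T_1)$ and $e_{p,2}^k(T)=0$, and both are uniquely solvable. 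Hence the whole iteration is described by the interface vectors $\alpha^k:=e_{y,1}^k(T_1)$ and $\beta^k:=e_{p,2}^k(T_1)$, and the transmission conditions give $\alpha^k=-B_1\beta^{k-1}$, $\beta^k=B_2\alpha^{k-1}$, where $-B_1$ is the solution operator on $Q_1$ taking the terminal adjoint value (with zero initial state) to the terminal state, and $B_2$ the solution operator on $Q_2$ taking the initial state value (with zero terminal adjoint) to the initial adjoint value. Thus $(\alpha^k,\beta^k)=\mathcal M(\alpha^{k-1},\beta^{k-1})$ with $\mathcal M=\left(\begin{smallmatrix}0&-B_1\\ B_2&0\end{smallmatrix}\right)$, so $\mathcal M^2=\mathrm{diag}(-B_1B_2,-B_2B_1)$; since the local solves depend boundedly on their data, it suffices to prove $\rho(\mathcal M)<1$, i.e. $\rho(B_1B_2)<1$, and then $\mathcal M^k\to0$ forces the interface errors and hence the full subdomain errors to vanish.

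Second I would diagonalize $\Delta_h$. The semidiscrete generator of each local system, $\left(\begin{smallmatrix}\Delta_h&-I/\gamma\\-I&-\Delta_h\end{smallmatrix}\right)$, commutes with $\mathrm{diag}(\Delta_h,\Delta_h)$, and since $\Delta_h$ is symmetric negative definite (more generally, diagonalizable with negative real spectrum) we may write $\Delta_h\phi_j=-\lambda_j\phi_j$, $\lambda_j>0$; the coupled ODEs and the whole ASN recursion then split into scalar problems, one per mode. On mode $j$ the scalar system has characteristic exponents $\pm\omega_j$ with $\omega_j:=\sqrt{\lambda_j^2+1/\gamma}$, and solving the two scalar two-point problems explicitly yields the positive scalars
\[
B_1(\lambda_j)=\frac{\sinh(\omega_jT_1)}{\gamma\big(\omega_j\cosh(\omega_jT_1)+\lambda_j\sinh(\omega_jT_1)\big)},\qquad B_2(\lambda_j)=\frac{\sinh(\omega_j\ell)}{\omega_j\cosh(\omega_j\ell)+\lambda_j\sinh(\omega_j\ell)},\qquad \ell:=T-T_1 .
\]
On this mode the eigenvalues of $\mathcal M$ are $\pm i\sqrt{B_1(\lambda_j)B_2(\lambda_j)}$, so $\rho(\mathcal M)=\max_j\sqrt{B_1(\lambda_j)B_2(\lambda_j)}$ and everything reduces to $B_1(\lambda_j)B_2(\lambda_j)<1$ for each $j$.

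This last inequality is the crux. Writing $f(x):=\omega_j\cosh(\omega_jx)+\lambda_j\sinh(\omega_jx)$ and using $1/\gamma=\omega_j^2-\lambda_j^2$, one has $B_1(\lambda_j)B_2(\lambda_j)=(\omega_j^2-\lambda_j^2)\sinh(\omega_jT_1)\sinh(\omega_j\ell)/\big(f(T_1)f(\ell)\big)$, and the hyperbolic addition formulas collapse the gap to
\[
f(T_1)f(\ell)-(\omega_j^2-\lambda_j^2)\sinh(\omega_jT_1)\sinh(\omega_j\ell)=\lambda_j^2\cosh(\omega_jT)+(\omega_j^2-\lambda_j^2)\cosh\!\big(\omega_j(T_1-\ell)\big)+\omega_j\lambda_j\sinh(\omega_jT),
\]
which is strictly positive because $\cosh\ge1$ and $T_1,\ell>0$. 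Hence $B_1(\lambda_j)B_2(\lambda_j)<1$ for all $j$, so $\rho(\mathcal M)<1$ and the ASN iteration converges. I expect the hyperbolic identity to be the only genuine obstacle: carrying out the explicit scalar solves cleanly, and then spotting that the substitution $1/\gamma=\omega_j^2-\lambda_j^2$ is exactly what turns $1-B_1(\lambda_j)B_2(\lambda_j)$ into a sum of manifestly positive terms. An energy-based alternative, mirroring the proof of Theorem~\ref{thm1}, tests the $Q_1$-error equations against each other while keeping the interface boundary terms and yields $\langle\alpha^k,\beta^{k-1}\rangle=-\int_0^{T_1}\!\big(\|e_{y,1}^k\|^2+\tfrac1\gamma\|e_{p,1}^k\|^2\big)$ together with its $Q_2$-analogue; this shows at once that $B_1$ and $B_2$ are symmetric positive definite, so $B_1B_2$ has positive real spectrum and $\rho(\mathcal M)=\sqrt{\rho(B_1B_2)}$, but pushing that spectrum strictly below $1$ still seems to need the mode-by-mode bound, so I would take the diagonalization as the main line.
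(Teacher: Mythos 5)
Your proof is correct, but it takes a genuinely different route from the paper. The paper argues by energy estimates: it tests the error equations against themselves, integrates in time, invokes discrete integration by parts and a discrete Poincar\'e inequality, and shows the weighted interface quantity $\gamma\|e_1^k(T_1)\|^2+\|w_2^k(T_1)\|^2$ decreases monotonically; it then deduces from the vanishing of the accumulated dissipation that the errors tend to zero. You instead reduce the iteration to the two interface vectors, observe that the iteration operator is block anti-diagonal, diagonalize $\Delta_h$, solve the resulting scalar two-point boundary value problems explicitly via $e^{tM}=\cosh(\omega t)I+\omega^{-1}\sinh(\omega t)M$, and verify $B_1(\lambda_j)B_2(\lambda_j)<1$ by a hyperbolic addition identity (I checked your formulas for $B_1$, $B_2$ and the identity
$f(T_1)f(\ell)-(\omega_j^2-\lambda_j^2)\sinh(\omega_jT_1)\sinh(\omega_j\ell)=\lambda_j^2\cosh(\omega_jT)+\gamma^{-1}\cosh(\omega_j(T_1-\ell))+\omega_j\lambda_j\sinh(\omega_jT)>0$; they are right). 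What your approach buys is strictly more than the theorem asks: an explicit geometric contraction factor $\rho=\max_j\sqrt{B_1(\lambda_j)B_2(\lambda_j)}<1$, which is moreover uniformly bounded away from $1$ as $h\to0$ (as $\lambda\to0^+$ your product tends to $\tanh(\omega T_1)\tanh(\omega\ell)<1$). The paper's proof gives no rate at all and has to append a heuristic assumption ($z^k(t)\ge\beta_kz^k(T_1)$) to discuss one; your computation supplies exactly the quantitative information the paper admits it lacks. The trade-off is that your argument leans on the constant-coefficient, two-subdomain, self-adjoint structure (you need $\Delta_h$ diagonalizable with negative spectrum and time-independent coefficients to get the closed-form solves), whereas the paper's energy argument transfers more directly to other symmetric discretizations and, in principle, to many subdomains. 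One small point to make explicit if you write this up: the reduction to $\rho(\mathcal M)<1$ uses that the mode count is finite for a fixed semidiscretization, so the strict per-mode inequality already yields a strict global bound; your limiting computation shows it even holds uniformly in $h$.
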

\begin{proof}
According to Theorem \ref{thm1}, subtracting (\ref{tdd_msch_ASN1}-\ref{tdd_msch_ASN2}) from the corresponding semi-discretized version of (\ref{tdd_2s_1}-\ref{tdd_2s_2})
leads to the following coupled system of the approximation error functions
\eq \label{tdd_msch_ASN1err}
\left\{\begin{array}{rl}
-\partial_t e_1^{k} + \Delta_h e_1^{k} -w_1^{k}/\gamma=&0\ \tn{in} Q_1, \quad 
e_1^{k}(0)=0 \tn{in}\ \Omega,\\
\partial_t w_1^{k}+ \Delta_h w_1^{k}+ e_1^{k}=& 0\ \tn{in} Q_1,\quad
w_1^{k}(T_1)= w_{2}^{k-1}(T_1)\ \tn{in}\ \Omega,
\end{array}\right.
\ee
\eq \label{tdd_msch_ASN2err}
\left\{\begin{array}{rl}
-\partial_t e_2^{k} + \Delta_h e_2^{k} -w_2^{k}/\gamma=&0\ \tn{in} Q_2, \quad 
e_2^{k}(T_1)=e_1^{k-1}(T_1) \tn{in}\ \Omega,\\
\partial_t w_2^{k}+ \Delta_h w_2^{k}+ e_2^{k}=& 0\ \tn{in} Q_2,\quad
w_2^{k}(T)=0\ \tn{in}\ \Omega,
\end{array}\right.
\ee
where $e_1^k(t)=y_1^k(t)-y_1(t)$, $e_2^k(t)=y_2^k(t)-y_2(t)$, $w_1^k(t)=p_1^k(t)-p_1(t)$, $w_2^k(t)=p_2^k(t)-p_2(t)$
are the error vector functions of the state $y$ and adjoint state $p$ on each sub-interval.  
If we can show that both $e_1^{k}(T_1)$ and $w_{2}^{k}(T_1)$ converge to zero as $k$ goes to infinity,
then the uniqueness of the solution to both (\ref{tdd_msch_ASN1err}) and (\ref{tdd_msch_ASN2err}) obviously
implies all error vector functions over each sub-interval also converge to zero as $k$ goes to infinity.

In (\ref{tdd_msch_ASN1err}), it follows from multiplying from the left side of the first equation with {$(e_1^{k})^\intercal$}, 
the transpose of $(e_1^{k})$, and the second one with $(w_1^{k})^\intercal $, respectively,
\eq \label{tdd_msch_ASN1err1}
\left\{\begin{array}{rl}
-\frac{1}{2} \partial_t [(e_1^{k})^\intercal e_1^{k}]+ (e_1^{k})^\intercal \Delta_h e_1^{k} -(e_1^{k})^\intercal w_1^{k}/\gamma=&0\ \tn{in} Q_1, \quad 
e_1^{k}(0)=0 \tn{in}\ \Omega,\\
\frac{1}{2} \partial_t [(w_1^{k})^\intercal w_1^{k}]+ (w_1^{k})^\intercal \Delta_h w_1^{k}+ (w_1^{k})^\intercal e_1^{k}=& 0\ \tn{in} Q_1,\quad
w_1^{k}(T_1)= w_{2}^{k-1}(T_1)\ \tn{in}\ \Omega.
\end{array}\right.
\ee
Since $(e_1^{k})^\intercal w_1^{k}= (w_1^{k})^\intercal e_1^{k}$, the addition of the first equation multiplied with $\gamma$  and the second one leads to
\eq \label{tdd_msch_ASN1err2}
\left\{\begin{array}{rl}
-\frac{1}{2} \partial_t \gamma [(e_1^{k})^\intercal e_1^{k}]+ \frac{1}{2} \partial_t [(w_1^{k})^\intercal w_1^{k}]+
\gamma (e_1^{k})^\intercal \Delta_h e_1^{k} + (w_1^{k})^\intercal \Delta_h w_1^{k} =&0\ \tn{in} Q_1, \quad \\
e_1^{k}(0)=0,\quad  w_1^{k}(T_1)= w_{2}^{k-1}(T_1)\ \tn{in}\ \Omega.
\end{array}\right.
\ee
Applying the Fundamental Theorems of Calculus, a straightforward integration of the above equation from $0$ to $T_1$ gives
\eq \label{tdd_msch_ASN1err3} 
\left\{\begin{array}{rl}
\frac{1}{2} \gamma [(e_1^{k}(0))^\intercal e_1^{k}(0)-(e_1^{k}(T_1))^\intercal e_1^{k}(T_1)]+ \frac{1}{2}[(w_1^{k}(T_1))^\intercal w_1^{k}(T_1)-(w_1^{k}(0))^\intercal w_1^{k}(0)]+\\
\int_0^{T_1} [\gamma (e_1^{k})^\intercal \Delta_h e_1^{k}]dt + \int_0^{T_1} [(w_1^{k})^\intercal \Delta_h w_1^{k}] dt=&0\ \tn{in} Q_1, \quad \\
e_1^{k}(0)=0,\quad  w_1^{k}(T_1)= w_{2}^{k-1}(T_1)\ \tn{in}\ \Omega. 
\end{array}\right.
\ee
It then follows by enforcing the given end-point conditions and rearranging the terms (using 2-norm $\|\cdot \|$)
\eq \label{tdd_msch_ASN1err4}  
 \gamma \|e_1^{k}(T_1)\|^2= \|w_2^{k-1}(T_1)\|^2-\|w_1^{k}(0)\|^2-
\int_0^{T_1} 2\gamma \|\nabla_h e_1^{k}\|^2 dt - \int_0^{T_1} 2\|\nabla_h w_1^{k}\|^2 dt,   
\ee
where we have used the fact (based on the discrete version of integration by parts \cite{Jovanovic2014})
$$z^\intercal (-\Delta_h) z=\langle z,(-\Delta_h) z\rangle=\langle\nabla_h z,\nabla_h z\rangle=\|\nabla_h z\|^2.$$

Similarly in \eqref{tdd_msch_ASN2err}, with an integration from $T_1$ to $T$ , we will get
 \eq \label{tdd_msch_ASN2err3} 
\left\{\begin{array}{rl}
\frac{1}{2} \gamma [(e_2^{k}(T_1))^\intercal e_2^{k}(T_1)-(e_2^{k}(T))^\intercal e_2^{k}(T)]+ \frac{1}{2}[(w_2^{k}(T))^\intercal w_2^{k}(T)-(w_2^{k}(T_1))^\intercal w_2^{k}(T_1)]+\\
\int_{T_1}^T [\gamma (e_2^{k})^\intercal \Delta_h e_2^{k}]dt + \int_{T_1}^T [(w_2^{k})^\intercal \Delta_h w_2^{k}] dt=&0\ \tn{in} Q_2, \quad \\
e_2^{k}(T_1)=e_1^{k-1}(T_1),\quad  w_2^{k}(T)= 0\ \tn{in}\ \Omega,
\end{array}\right.
\ee
from which we obtain
\eq \label{tdd_msch_ASN2err4}   
\|w_2^{k}(T_1)\|^2= \gamma \|e_1^{k-1}(T_1)\|^2-\gamma \|e_2^{k}(T)\|^2-
\int_{T_1}^T 2\gamma \|\nabla_h e_2^{k}\|^2 dt - \int_{T_1}^T 2\|\nabla_h w_2^{k}\|^2 dt.
\ee
Add (\ref{tdd_msch_ASN1err4}) and (\ref{tdd_msch_ASN2err4}) together to get
\eq \label{tdd_msch_ASNerr5}   
\gamma \|e_1^{k}(T_1)\|^2+\|w_2^{k}(T_1)\|^2&=\gamma \|e_1^{k-1}(T_1)\|^2+\|w_2^{k-1}(T_1)\|^2
-\|w_1^{k}(0)\|^2-\gamma \|e_2^{k}(T)\|^2\\
&-\int_0^{T_1} 2\gamma \|\nabla_h e_1^{k}\|^2 dt - \int_0^{T_1} 2\|\nabla_h w_1^{k}\|^2 dt 
-\int_{T_1}^T 2\gamma \|\nabla_h e_2^{k}\|^2 dt - \int_{T_1}^T 2\|\nabla_h w_2^{k}\|^2 dt\\
&\le 
\gamma \|e_1^{k-1}(T_1)\|^2+\|w_2^{k-1}(T_1)\|^2
-\|w_1^{k}(0)\|^2-\gamma \|e_2^{k}(T)\|^2\\
&-\int_0^{T_1} 2\gamma c \|e_1^{k}\|^2 dt - \int_0^{T_1} 2 c \| w_1^{k}\|^2 dt 
-\int_{T_1}^T 2\gamma c\|e_2^{k}\|^2 dt - \int_{T_1}^T 2 c\|w_2^{k}\|^2 dt\\
&\le \gamma \|e_1^{k-1}(T_1)\|^2+\|w_2^{k-1}(T_1)\|^2 \,,
\ee
where in the second last step we used the following inequalities (based on the discrete version of Poincar\'e inequality)
\[
 \|e_1^{k}\|^2\le (1/c) \|\nabla_h e_1^{k}\|^2, \|e_2^{k}\|^2\le (1/c) \|\nabla_h e_2^{k}\|^2,
 \|w_1^{k}\|^2\le (1/c) \|\nabla_h w_1^{k}\|^2,  \|w_2^{k}\|^2\le (1/c) \|\nabla_h w_2^{k}\|^2, 
\]
with a positive constant $c$  independent of $h$, $T$ and $T_1$.

From (\ref{tdd_msch_ASNerr5}), we can easily see that the sequence $\{\gamma \|e_1^{k}(T_1)\|^2+\|w_2^{k}(T_1)\|^2\}_{k=0}^{\infty}$ is decreasing {monotonically and bounded below.}
It hence implies (note $\gamma>0$), { there exists a constant $\lambda\ge 0$, such that }
\[
 \lim_{k\to\infty } \gamma \|e_1^{k}(T_1)\|^2+\|w_2^{k}(T_1)\|^2= {\lambda}.
\]
{Then, by letting $k\to\infty$ in the first equality of \eqref{tdd_msch_ASNerr5}, we conclude that 
\[ \lim_{k\to\infty } \|w_1^{k}(0)\|^2+\gamma \|e_2^{k}(T)\|^2 
+\int_0^{T_1} 2(\gamma \|\nabla_h e_1^{k}\|^2 + \|\nabla_h w_1^{k}\|^2)  dt
+\int_{T_1}^{T} 2(\gamma \|\nabla_h e_2^{k}\|^2  +  \|\nabla_h w_2^{k}\|^2) dt=0,
\]
which further implies
\[\lim_{k\to\infty }\|\nabla_h e_1^k(t)\|= \lim_{k\to\infty }\|\nabla_h w_1^k(t)\|=0\] on $(0, T_1)$ and 
\[\lim_{k\to\infty }\|\nabla_h e_2^k(t)\|= \lim_{k\to\infty }\|\nabla_h w_2^k(t)\|= 0\] over $(T_1, T)$. 
Since $e_1$, $e_2$, $w_1$, $w_2$ at any time $t$ all belong to $H^1_0(\Omega)$, 
we have \[\lim_{k\to\infty } e_1^k (t)=\lim_{k\to\infty } w_1^k (t)=0\] and 
\[\lim_{k\to\infty } e_2^k (t)=\lim_{k\to\infty } w_2^k (t)=0\] on the time intervals $(0, T_1)$ and $(T_1, T)$, respectively.}
Therefore, for any initial guess, both $e_1^{k}(T_1)$ and $w_2^{k}(T_1)$ converge to zero as $k\to \infty$.
This completes the proof of the convergence of our ASN algorithm.
\end{proof}

Repeating  the above arguments, we can obtain the convergence results for the following semi-discretized MSN algorithm: 
\eq \label{tdd_msch_MSN1}
\left\{\begin{array}{rl}
-\partial_t y_1^{k} + \Delta_h y_1^{k} -p_1^{k}/\gamma=&f_1\ \tn{in} Q_1, \quad 
y_1^{k}(0)=y_0 \tn{in}\ \Omega,\\
\partial_t p_1^{k}+ \Delta_h p_1^{k}+ y_1^{k}=& g_1\ \tn{in} Q_1,\quad
p_1^{k}(T_1)= p_{2}^{k-1}(T_1)\ \tn{in}\ \Omega,
\end{array}\right.
\ee
\eq \label{tdd_msch_MSN2}
\left\{\begin{array}{rl}
-\partial_t y_2^{k} + \Delta_h y_2^{k} -p_2^{k}/\gamma=&f_2\ \tn{in} Q_2, \quad 
y_2^{k}(T_1)=y_1^{k}(T_1) \tn{in}\ \Omega,\\
\partial_t p_2^{k}+ \Delta_h p_2^{k}+ y_2^{k}=& g_2\ \tn{in} Q_2,\quad
p_2^{k}(T)=0\ \tn{in}\ \Omega.
\end{array}\right.
\ee
To illustrate the convergence difference between ASN and MSN algorithm, we briefly sketch the proof below. 
\begin{theorem} \label{thmMSN}
 The MSN algorithm given by the iterations (\ref{tdd_msch_MSN1}-\ref{tdd_msch_MSN2}) is convergent.
\end{theorem}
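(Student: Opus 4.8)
The plan is to mimic the energy argument used for the ASN algorithm, accounting for the only structural change: in the multiplicative variant the second subdomain's initial condition $y_2^k(T_1)=y_1^k(T_1)$ uses the \emph{current} iterate on $Q_1$ rather than the previous one. First I would subtract the semi-discretized hybrid system \eqref{tdd_2s_1}-\eqref{tdd_2s_2} from \eqref{tdd_msch_MSN1}-\eqref{tdd_msch_MSN2} to get the error system, which is identical to \eqref{tdd_msch_ASN1err} except that the coupling on $Q_2$ reads $e_2^k(T_1)=e_1^k(T_1)$. As in the proof of Theorem \ref{thmASN}, it suffices to show $e_1^k(T_1)\to 0$ and $w_2^k(T_1)\to 0$, since uniqueness of solutions to the decoupled subsystems then forces all error functions to vanish.

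Next I would carry out the same two energy identities. On $Q_1$, multiplying the error equations by $(e_1^k)^\intercal$ and $(w_1^k)^\intercal$, combining with weight $\gamma$, integrating over $(0,T_1)$, and using the discrete integration-by-parts identity $z^\intercal(-\Delta_h)z=\|\nabla_h z\|^2$, I get exactly \eqref{tdd_msch_ASN1err4}:
\[
\gamma\|e_1^k(T_1)\|^2=\|w_2^{k-1}(T_1)\|^2-\|w_1^k(0)\|^2-\int_0^{T_1}2\gamma\|\nabla_h e_1^k\|^2\,dt-\int_0^{T_1}2\|\nabla_h w_1^k\|^2\,dt.
\]
On $Q_2$, integrating over $(T_1,T)$ in the same way, the only difference from \eqref{tdd_msch_ASN2err4} is that the inflow datum is now $e_1^k(T_1)$ instead of $e_1^{k-1}(T_1)$, yielding
\[
\|w_2^k(T_1)\|^2=\gamma\|e_1^k(T_1)\|^2-\gamma\|e_2^k(T)\|^2-\int_{T_1}^T2\gamma\|\nabla_h e_2^k\|^2\,dt-\int_{T_1}^T2\|\nabla_h w_2^k\|^2\,dt.
\]

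Then I would combine these differently than in the additive case. Substituting the first identity into the second gives $\|w_2^k(T_1)\|^2$ bounded above by $\|w_2^{k-1}(T_1)\|^2$ minus a sum of nonnegative terms, so the scalar sequence $\{\|w_2^k(T_1)\|^2\}_k$ is monotonically decreasing and bounded below, hence convergent; its successive differences, and therefore all the subtracted nonnegative quantities (including $\|w_1^k(0)\|^2$, $\gamma\|e_2^k(T)\|^2$ and the gradient integrals), tend to zero. In particular $\int_0^{T_1}\|\nabla_h e_1^k\|^2\,dt\to 0$, and since $e_1^k(0)=0$ the first error equation together with the discrete Poincar\'e inequality forces $\|e_1^k(T_1)\|\to 0$; feeding this back into the $Q_2$ identity gives $\|w_2^k(T_1)\|\to 0$ as well. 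The rest of the argument (passing from the gradient-norm limits to pointwise-in-time vanishing of all four error functions, using that they lie in $H^1_0(\Omega)$) is word-for-word the same as in Theorem \ref{thmASN}.

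The main obstacle — really the only subtlety — is getting the right telescoping structure out of the two energy identities: in the additive case the natural Lyapunov functional is the \emph{sum} $\gamma\|e_1^{k}(T_1)\|^2+\|w_2^{k}(T_1)\|^2$, whereas here one must instead track $\|w_2^k(T_1)\|^2$ alone (or equivalently $\gamma\|e_1^{k+1}(T_1)\|^2$), because the current-iterate coupling on $Q_1$ shifts the index bookkeeping by half a sweep. Once that substitution is set up correctly, everything else reduces to the discrete Poincar\'e and integration-by-parts estimates already established, so I would state those steps only briefly and refer back to the proof of Theorem \ref{thmASN} for the repetitive details.
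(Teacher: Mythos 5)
Your proposal is correct and follows essentially the same route as the paper: the identical two energy identities on $Q_1$ and $Q_2$, combined (your ``substitution'' is algebraically the same as the paper's addition, since the $\gamma\|e_1^k(T_1)\|^2$ terms cancel) to show that $\{\|w_2^k(T_1)\|^2\}_k$ alone is the monotone Lyapunov quantity, with the concluding limit argument deferred to the proof of Theorem~\ref{thmASN} exactly as the paper does. Your closing observation about the half-sweep index shift being the only structural difference from the additive case is precisely the point the paper highlights.
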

\begin{proof}
 According to Theorem \ref{thm1}, subtracting (\ref{tdd_msch_MSN1}-\ref{tdd_msch_MSN2}) from the corresponding semi-discretized version of (\ref{tdd_2s_1}-\ref{tdd_2s_2})
leads to the following coupled system of the approximation error functions
\eq \label{tdd_msch_MSN1err}
\left\{\begin{array}{rl}
-\partial_t e_1^{k} + \Delta_h e_1^{k} -w_1^{k}/\gamma=&0\ \tn{in} Q_1, \quad 
e_1^{k}(0)=0 \tn{in}\ \Omega,\\
\partial_t w_1^{k}+ \Delta_h w_1^{k}+ e_1^{k}=& 0\ \tn{in} Q_1,\quad
w_1^{k}(T_1)= w_{2}^{k-1}(T_1)\ \tn{in}\ \Omega,
\end{array}\right.
\ee
\eq \label{tdd_msch_MSN2err}
\left\{\begin{array}{rl}
-\partial_t e_2^{k} + \Delta_h e_2^{k} -w_2^{k}/\gamma=&0\ \tn{in} Q_2, \quad 
e_2^{k}(T_1)=e_1^{k}(T_1) \tn{in}\ \Omega,\\
\partial_t w_2^{k}+ \Delta_h w_2^{k}+ e_2^{k}=& 0\ \tn{in} Q_2,\quad
w_2^{k}(T)=0\ \tn{in}\ \Omega.
\end{array}\right.
\ee
In (\ref{tdd_msch_MSN1err}), it follows from the same procedure that
\eq \label{tdd_msch_MSN1err4}  
 \gamma \|e_1^{k}(T_1)\|^2= \|w_2^{k-1}(T_1)\|^2-\|w_1^{k}(0)\|^2-
\int_0^{T_1} 2\gamma \|\nabla_h e_1^{k}\|^2 dt - \int_0^{T_1} 2\|\nabla_h w_1^{k}\|^2 dt.   
\ee 
Similarly,  in (\ref{tdd_msch_MSN2err}), we can obtain the slightly different
\eq \label{tdd_msch_MSN2err4}   
\|w_2^{k}(T_1)\|^2={ \gamma \|e_1^{k}(T_1)\|^2}-\gamma \|e_2^{k}(T)\|^2-
\int_{T_1}^T 2\gamma \|\nabla_h e_2^{k}\|^2 dt - \int_{T_1}^T 2\|\nabla_h w_2^{k}\|^2 dt.
\ee
Adding (\ref{tdd_msch_MSN1err4}) and (\ref{tdd_msch_MSN2err4}) together, we have 
\eq \label{tdd_msch_MSNerr5}   
\|w_2^{k}(T_1)\|^2&=\|w_2^{k-1}(T_1)\|^2
-\|w_1^{k}(0)\|^2-\gamma \|e_2^{k}(T)\|^2\\
&-\int_0^{T_1} 2\gamma \|\nabla_h e_1^{k}\|^2 dt - \int_0^{T_1} 2\|\nabla_h w_1^{k}\|^2 dt 
-\int_{T_1}^T 2\gamma \|\nabla_h e_2^{k}\|^2 dt - \int_{T_1}^T 2\|\nabla_h w_2^{k}\|^2 dt\\
&\le 
\|w_2^{k-1}(T_1)\|^2
-\|w_1^{k}(0)\|^2-\gamma \|e_2^{k}(T)\|^2\\
&-\int_0^{T_1} 2\gamma c \|e_1^{k}\|^2 dt - \int_0^{T_1} 2 c \| w_1^{k}\|^2 dt 
-\int_{T_1}^T 2\gamma c\|e_2^{k}\|^2 dt - \int_{T_1}^T 2 c\|w_2^{k}\|^2 dt\\
&\le \|w_2^{k-1}(T_1)\|^2,
\ee
from which we can easily see that the nonnegative sequence $\{\|w_2^{k}(T_1)\|^2\}_{k=0}^{\infty}$ is { monotonically } decreasing {and bounded below, we conclude the limit exists as $k\rightarrow \infty$}.
{Following the same argument as that in Theorem \ref{thmASN}, we have}, 
for any initial guess, both $e_1^{k}(T_1)$ and $w_2^{k}(T_1)$ converge to zero as $k\to \infty$.
This completes the proof of the convergence of our MSN algorithm.
\end{proof}

The above conclusions are {counter-intuitive and surprising} in the view of the well-recognized fact that
a standard ASN algorithm without overlap for elliptic boundary value problems is not convergent as a stand-alone iterative solver \cite{Efstathiou_2003,Gander2008}.
Nonetheless, the above convergence result does not provide any explicit estimate of the convergence rate of the ASN algorithm, 
which will be carried out further in the following by making use of more detailed information contained in (\ref{tdd_msch_ASNerr5}).
Let $y^k(t)$ be the approximation of $y(t)$ at {the} $k$-th iteration by gluing $y_1^k$ and $y_2^k$ together (taking the value of $y_1^k$ at $T_1$). 
Similarly, let $p^k(t)$ be the approximation of $p(t)$ at $k$-th iteration by gluing $p_1^k$ and $p_2^k$ together (taking the value of $p_2^k$ at  $T_1$).  
Define the global error vector functions $e^k(t)=y^k(t)-y(t)$ and $w^k(t)=p^k(t)-p(t)$. 
Then there holds (in the sense of Lebesgue integral)
\[
 \int_0^{T_1} 2\gamma c \|e_1^{k}\|^2 dt+\int_{T_1}^T 2\gamma c\|e_2^{k}\|^2 dt =2c \int_0^T \gamma \|e^k\|^2 dt
\]
and
\[
 \int_0^{T_1} 2 c \| w_1^{k}\|^2 dt +\int_{T_1}^T 2 c\|w_2^{k}\|^2 dt=2c \int_0^T \|w^k\|^2 dt.
\]
Let $z^k(t):=\gamma \|e^{k}(t)\|^2+\|w^{k}(t)\|^2$. Then by definition we have  $z^k(T_1)=\gamma \|e_1^{k}(T_1)\|^2+\|w_2^{k}(T_1)\|^2$.
By using this new vector function $z^k(t)$ and dropping the non-positive terms $(-\|w_1^{k}(0)\|^2-\gamma \|e_2^{k}(T)\|^2)$, it follows from (\ref{tdd_msch_ASNerr5}) that
\eq \label{tdd_msch_ASNerr6}   
z^k(T_1)\le z^{k-1}(T_1)-2c\int_0^T z^k(t) dt
\ee
holds for any fixed $T_1\in (0,T)$. 
Considering that $z^k(0)=\|w_1^k(0)\|^2$ and $z^k(T)=\gamma \|e_2^k(T)\|^2$ are the approximation errors at both end points,
we assume there holds $z^k(t)\ge \b_k z^k(T_1)$ for some $\b_k>0$, i.e., the global minimal approximation error is roughly bounded below by the local error at $t=T_1$.
With this assumption and in view of (\ref{tdd_msch_ASNerr6}), we can arrive
\eq \label{tdd_msch_ASNerr7}   
z^k(T_1)\le z^{k-1}(T_1)-2c\b_k T z^k(T_1),
\ee
which clearly implies
\eq \label{tdd_msch_ASNerr8}   
 {z^k(T_1)}\le \frac{1}{1+2c\b_k T} {z^{k-1}(T_1)},
\ee
where $\b_k$ is expected to be dependent on $k$, $T$, and the problem.
{
In view of (\ref{tdd_msch_ASNerr8}), the convergence rate of the ASN algorithm is mainly determined by the value of $\b_k$.
In particular, numerical simulations show that the convergence rate of 1-level ASN algorithm gets worse as the approximation errors become smaller.
This can be explained by the easy-to-check fact that $\b_k$ indeed decreases to zero as $k$ goes to infinity,
which can also be easily seen from (\ref{tdd_msch_ASNerr6}) by observing $\int_0^T z^k(t) dt\to 0$ as $k$ goes to infinity.
Hence, from {a} theoretical point of view, the 1-level ASN algorithm may converge very {slowly},
especially when the spatial operator has a zero eigenvalue, which was also pointed out in \cite{Gander_2016}.
For our considered spatially uniform elliptic problems, the obtained convergence rates are, however, very satisfactory in numerical {experiments}.
}

{
The convergence analysis of the ASO and MSO algorithms will require some different {proofs},
which are not carried out in current paper. However, we numerically demonstrate the convergence of
both one-level and two-level ASO and MSO algorithms as stand-alone solvers and preconditioners, respectively.
According to our simulations, the ASO and MSO algorithms converge slightly faster than the ASN and MSN algorithms, respectively.
It is well-known that adding a suitable overlap (as in ASO and MSO) in domain decomposition algorithms often leads to better convergence rate that nonoverlapping ones. 
Therefore, it is reasonable for us to believe that the proved convergence of ASN and MSN algorithms 
\textit{nominally} implies (but not rigorously {proves}) the convergence of both ASO and MSO algorithms.
Further convergence analysis of the ASO and MSO algorithms (including the effects of the size of overlap) will be investigated in future. }
\section{Two-level Time Domain Decomposition Algorithms with Many Subdomains\label{sec: two}}
The above introduced one-level domain decomposition algorithms work reasonably well when the number of subdomains $K$ is not large.
However, existing theoretical and numerical results (see also Section \ref{sec: num}) indicate that one-level domain decomposition methods
based only on local subdomain solving are not scalable with respect to the number of subdomains $K$.
Fortunately, this problem can be resolved by using a two-level algorithm in which the current one-level parallel subdomain iterations are 
corrected globally by solving an appropriately selected coarse grid problem with the size of order $O(K)$ \cite{Dolean2015}. 
Such two-level domain decomposition algorithms based on coarse grid corrections can be viewed as a variant of the standard two-level multigrid method,
where the one-level parallel subdomain iterations function as an advanced smoother \cite{Xu_1992,Xu_1998}.
Nevertheless, the coarse grid size in the context of two-level domain decomposition algorithms is usually significantly smaller than
that of a two-level multigrid method based on halving the mesh sizes.  
In general, the successful construction of an effective and efficient coarse grid space is a very dedicated task that often requires many 
 {heuristic numerical investigations by trial and error} \cite{Prudencio_2007}.
In this section, we briefly describe our coarse grid spaces based on the ones discussed in \cite{Martin2012,Gander2014},
which seem to be quite effective according to our following numerical experiments in Section \ref{sec: num}.

We first consider the nonoverlapping cases in MSN and ASN algorithms.
Although it is more convenient to conduct theoretical analysis in the continuous setting,
here we choose to illustrate our algorithms in its discrete formulation 
as this is the most faithful replication of the actual implementation in our codes.
Assume $(N-1)$ is an integer multiple of $K$.
Let the global set of time grid points $\{0=t_0<t_1<\cdots<t_{N-1}<t_N=T\}$ be decomposed into $K$ nonoverlapping subsets of equal size.
Figure \ref{fig:ASN-1} depicts a typical nonoverlapping decomposition scheme with $N=17$ and $K=4$,
where the boundary nodes $t_0=0$ and $t_N=T$ will be absorbed into the right hand side during the computation.
Notice that each subdomain will need to receive two boundary {node} approximations from its neighboring subdomains,
e.g., subdomain 2 ($\{t_5,t_6,t_7,t_8\}$) treats the approximations at nodes $t_4$ and $t_9$ as virtual boundary nodes.
\begin{figure}[!htb] 
    \centering
      \resizebox {\columnwidth} {!} {
 \begin{tikzpicture}[yscale=-1] 
        \draw (0, 0) grid (17, 0); 
          \draw [color=blue, fill=blue] (0, 0) circle (2pt) node[align=center,   above] {$t_0$};
          \draw [color=blue, fill=blue] (17, 0) circle (2pt) node[align=center,   above] {$t_{17}$};
         \foreach \x in {1,...,16}
        { 
         \draw (\x,0) circle (2pt) node[align=center,   above] {$t_{\x}$};
       }
\draw [decorate,decoration={brace,amplitude=10pt,mirror},xshift=0.4pt,yshift=-0.4pt](1,0) -- (4,0) node[black,midway,yshift=-0.6cm] {subdomain 1};
 \draw [decorate,decoration={brace,amplitude=10pt,mirror},xshift=0.4pt,yshift=-0.4pt](5,0) -- (8,0) node[black,midway,yshift=-0.6cm] {subdomain 2};
\draw [decorate,decoration={brace,amplitude=10pt,mirror},xshift=0.4pt,yshift=-0.4pt](9,0) -- (12,0) node[black,midway,yshift=-0.6cm] {subdomain 3};
 \draw [decorate,decoration={brace,amplitude=10pt,mirror},xshift=0.4pt,yshift=-0.4pt](13,0) -- (16,0) node[black,midway,yshift=-0.6cm] {subdomain 4};
    
    \end{tikzpicture}
    }
    \caption{The nonoverlapping decomposition of all time grid points (empty circles) with $N=17$ and $K=4$.}
      \label{fig:ASN-1}
    \end{figure}
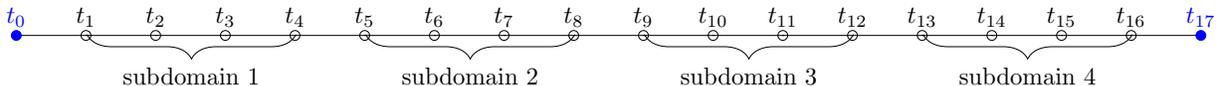
    
Following the approach in \cite{Martin2012,Gander2014}, 
we propose to choose the coarse grid nodes ($\{t_1,t_4,t_5,t_8,t_9,t_{12},t_{13},t_{16}\}$) as described in the following Figure  \ref{fig:ASN-2},
where the adjacent grid nodes connecting two subdomains are selected as coarse grid nodes.
 \begin{figure}[!htb] 
    \centering
      \resizebox {\columnwidth} {!} {
 \begin{tikzpicture}[yscale=-1] 
        \draw (0, 0) grid (17, 0); 
          \draw [color=blue, fill=blue] (0, 0) circle (2pt) node[align=center,   above] {$t_0$};
          \draw [color=blue, fill=blue] (17, 0) circle (2pt) node[align=center,   above] {$t_{17}$};
         \foreach \x in {1,...,16}
        { 
         \draw (\x,0) circle (2pt) node[align=center,   above] {$t_{\x}$};          
        }
         \foreach \x in {1,4,5,8,9,12,13,16}
        { 
         \filldraw (\x,0) circle (2pt) node[align=center,   above] {$t_{\x}$};                
        }
        
\draw [decorate,decoration={brace,amplitude=10pt,mirror},xshift=0.4pt,yshift=-0.4pt](1,0) -- (4,0) node[black,midway,yshift=-0.6cm] {subdomain 1};
 \draw [decorate,decoration={brace,amplitude=10pt,mirror},xshift=0.4pt,yshift=-0.4pt](5,0) -- (8,0) node[black,midway,yshift=-0.6cm] {subdomain 2};
\draw [decorate,decoration={brace,amplitude=10pt,mirror},xshift=0.4pt,yshift=-0.4pt](9,0) -- (12,0) node[black,midway,yshift=-0.6cm] {subdomain 3};
 \draw [decorate,decoration={brace,amplitude=10pt,mirror},xshift=0.4pt,yshift=-0.4pt](13,0) -- (16,0) node[black,midway,yshift=-0.6cm] {subdomain 4};
    
    \end{tikzpicture}
    }
    \caption{The choice of coarse grid nodes (filled circles) among fine grid points (all circles) with $N=17$ and $K=4$.}
      \label{fig:ASN-2}
    \end{figure}
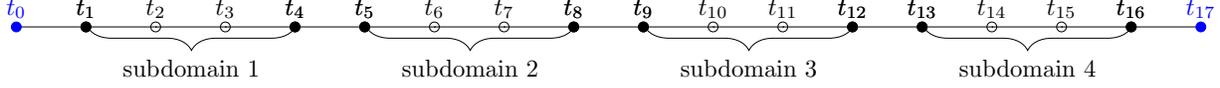
    
After the coarse grid nodes are determined, 
we follow the standard procedure to build the corresponding extension operator $E$ based on linear interpolation and
the restriction operator $R$ as the transpose of $E$ after row-normalization. 
Taking the coarse and fine grid nodes shown in Figure \ref{fig:ASN-2} as an example,
the corresponding extension operator $E\in \IR^{16\times 8}$ that interpolates a coarse grid approximation to a fine grid approximation 
has the following matrix formulation (transposed for economical exposition)
\begin{align}
E^\intercal=\bmt
 1 & \frac{2}{3} & \frac{1}{3} & 0 & 0 & 0 & 0 & 0 & 0 & 0 & 0 & 0 & 0 & 0 & 0 & 0\\ 
 0 & \frac{1}{3} & \frac{2}{3} & 1 & 0 & 0 & 0 & 0 & 0 & 0 & 0 & 0 & 0 & 0 & 0 & 0\\ 
 0 & 0 & 0 & 0 & 1 & \frac{2}{3} & \frac{1}{3} & 0 & 0 & 0 & 0 & 0 & 0 & 0 & 0 & 0\\ 
 0 & 0 & 0 & 0 & 0 & \frac{1}{3} & \frac{2}{3} & 1 & 0 & 0 & 0 & 0 & 0 & 0 & 0 & 0\\ 
 0 & 0 & 0 & 0 & 0 & 0 & 0 & 0 & 1 & \frac{2}{3} & \frac{1}{3} & 0 & 0 & 0 & 0 & 0\\ 
 0 & 0 & 0 & 0 & 0 & 0 & 0 & 0 & 0 & \frac{1}{3} & \frac{2}{3} & 1 & 0 & 0 & 0 & 0\\ 
 0 & 0 & 0 & 0 & 0 & 0 & 0 & 0 & 0 & 0 & 0 & 0 & 1 & \frac{2}{3} & \frac{1}{3} & 0\\ 
 0 & 0 & 0 & 0 & 0 & 0 & 0 & 0 & 0 & 0 & 0 & 0 & 0 & \frac{1}{3} & \frac{2}{3} & 1\\
\emt_{8\times 16}.
\end{align}
The corresponding restriction operator $R$ is given by $R=\frac{1}{2} E^\intercal$, with $\frac{1}{2}$ being the normalization factor calculated from the row sum of  $E^\intercal$.
With both $E$ and $R$ in {hand}, one can algebraically {set up} the coarse grid coefficient matrix through the Galerkin projection
\eq
 L_c=RL_hE,
\ee
where  the coarse grid matrix $L_c$ has a much smaller dimension of order $O(K)$.
Similar as the geometric multigrid method, it is also possible to geometrically construct the coarse grid coefficient matrix
by re-discretizing the original continuous problem with a chosen coarse mesh (needs to be uniform in our current finite difference scheme).
Our preliminary numerical tests show that such a geometrical approach of constructing the coarse grid matrix 
and the corresponding extension and restriction operators delivers less robust convergence rates than the above algebraic approach.
A 2-level ASN or MSN algorithm is obtained by complementing the above 1-level ASN or MSN algorithms (either as solver or preconditioner)
with one step of coarse grid correction based on the residual equation.
Suppose we have obtained a 1-level global solution approximation $w_h^{(1)}$ from each iteration of 1-level ASN or MSN algorithms,
the corresponding improved 2-level approximation $w_h^{(2)}$ can be calculated according to the correction
\eq \label{2-level-cc}
 w_h^{(2)}=w_h^{(1)}+E L_c^{-1} R (b_h-L_h w_h^{(1)}),
\ee
where $L_c^{-1}$ should be understood as one efficient coarse grid system solving with certain level of approximation errors.
Here we in fact applied the coarse grid correction in a multiplicative manner \cite{smith1996domain}. 
It is worthwhile to mention that the extra cost of adding one coarse grid correction as in (\ref{2-level-cc})
is usually negligible compared to the overall computational costs, if the number of subdomains is
far less than the total number of time grid points, i.e., $K\ll N$.

Next, we consider the overlapping cases in MSO and ASO algorithms.
The following Figure \ref{fig:ASO-1} depicts a typical overlapping decomposition scheme with $N=17$ and $K=4$,
where each subdomain extends one grid node into its neighboring subdomains in view of the nonoverlapping decompositions
shown in Figure \ref{fig:ASN-1}. Slightly different from the nonoverlapping cases, 
we choose the coarse grid nodes as described in Figure  \ref{fig:ASO-2},
where one grid node centered at the overlap region is selected as a coarse grid node.
Numerical tests show that the current choice gives scalable convergence rates,
although other better choices may not be within our numerical trials.
For instance, we also tested the classical choice of placing one coarse
grid point into the center of each subdomain, which however does not provide very scalable convergence rates.
Further discussion on the construction of better coarse grid spaces is beyond the scope of the current paper
and is left as our future research.
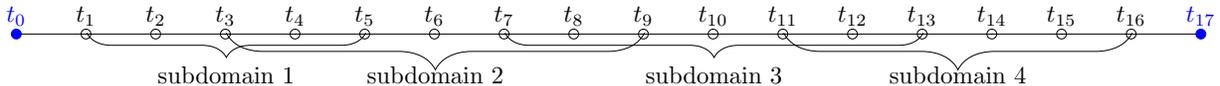
\begin{figure}[!htb] 
    \centering
      \resizebox {\columnwidth} {!} {
 \begin{tikzpicture}[yscale=-1] 
        \draw (0, 0) grid (17, 0); 
          \draw [color=blue, fill=blue] (0, 0) circle (2pt) node[align=center,   above] {$t_0$};
          \draw [color=blue, fill=blue] (17, 0) circle (2pt) node[align=center,   above] {$t_{17}$};
         \foreach \x in {1,...,16}
        { 
         \draw (\x,0) circle (2pt) node[align=center,   above] {$t_{\x}$};
       }
\draw [decorate,decoration={brace,amplitude=10pt,mirror},xshift=0.4pt,yshift=-0.4pt](1,0) -- (5,0) node[black,midway,yshift=-0.6cm] {subdomain 1};
 \draw [decorate,decoration={brace,amplitude=15pt,mirror},xshift=0.4pt,yshift=-0.4pt](3,0) -- (9,0) node[black,midway,yshift=-0.6cm] {subdomain 2};
\draw [decorate,decoration={brace,amplitude=10pt,mirror},xshift=0.4pt,yshift=-0.4pt](7,0) -- (13,0) node[black,midway,yshift=-0.6cm] {subdomain 3};
 \draw [decorate,decoration={brace,amplitude=15pt,mirror},xshift=0.4pt,yshift=-0.4pt](11,0) -- (16,0) node[black,midway,yshift=-0.6cm] {subdomain 4};
    
    \end{tikzpicture}
    }
    \caption{The overlapping decomposition of all time grid points (empty circles) with $N=17$ and $K=4$.}
      \label{fig:ASO-1}
    \end{figure}
    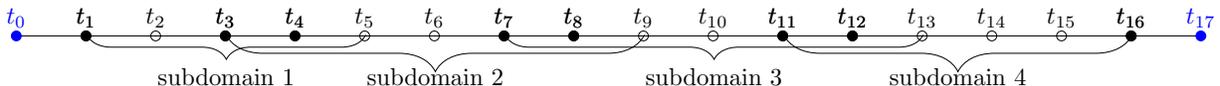
\begin{figure}[!htb] 
    \centering
      \resizebox {\columnwidth} {!} {
 \begin{tikzpicture}[yscale=-1] 
        \draw (0, 0) grid (17, 0); 
          \draw [color=blue, fill=blue] (0, 0) circle (2pt) node[align=center,   above] {$t_0$};
          \draw [color=blue, fill=blue] (17, 0) circle (2pt) node[align=center,   above] {$t_{17}$};
         \foreach \x in {1,...,16}
        { 
         \draw (\x,0) circle (2pt) node[align=center,   above] {$t_{\x}$};          
        }
         \foreach \x in {1,3,4,7, 8,11,12,16}
        { 
         \filldraw (\x,0) circle (2pt) node[align=center,   above] {$t_{\x}$};                
        }
        
\draw [decorate,decoration={brace,amplitude=10pt,mirror},xshift=0.4pt,yshift=-0.4pt](1,0) -- (5,0) node[black,midway,yshift=-0.6cm] {subdomain 1};
 \draw [decorate,decoration={brace,amplitude=15pt,mirror},xshift=0.4pt,yshift=-0.4pt](3,0) -- (9,0) node[black,midway,yshift=-0.6cm] {subdomain 2};
\draw [decorate,decoration={brace,amplitude=10pt,mirror},xshift=0.4pt,yshift=-0.4pt](7,0) -- (13,0) node[black,midway,yshift=-0.6cm] {subdomain 3};
 \draw [decorate,decoration={brace,amplitude=15pt,mirror},xshift=0.4pt,yshift=-0.4pt](11,0) -- (16,0) node[black,midway,yshift=-0.6cm] {subdomain 4};
   
    \end{tikzpicture}
    }
    \caption{The choice of coarse grid nodes (filled circles) among fine grid points (all circles) with $N=17$ and $K=4$.}
      \label{fig:ASO-2}
    \end{figure}
    
\section{Numerical Examples}\label{sec: num}
In this section, we provide numerical examples to demonstrate the effectiveness of our proposed methods. 
All simulations are implemented using MATLAB R2016a on a laptop PC
with Intel(R) Core(TM) i5-6200U CPU@2.30GHz and 8GB RAM.
We use the strip decomposition that divides the time interval $[0,T]$ into $K$ uniform parts, thus the space-time domain is partitioned to $K$ chunks. 
On each subdomain, we use the finite difference discretizations developed in \cite{LX2015a}, which provides a second-order accuracy in both space and time. 
The time step size $\tau$ is taken to be same as the spatial mesh size $h$.
For 1D examples, the sub-domain systems are solved by sparse direct solvers, such as the backslash `mldivide' solver of MATLAB.
For 2D examples, the efficient semi-coarsening multigrid solver with one V-cycle \cite{LX2015a} is utilized to approximately solve the sub-linear system on each subdomain.
However, any other efficient iterative solvers, such as algebraic multigrid methods \cite{Treister_2015}, are also applicable.

When overlapping methods are used, each subdomain is extended and overlaps its neighbors by a minimal overlap $\delta=\tau$  in the temporal direction. 
All iterative algorithms start with a random initial guess (use {the} \textsf{rand} function in MATLAB). 
Notice that our domain decomposition algorithms can be used as stand-alone iterative solvers and preconditioners, respectively.
We choose the standard stopping condition based on relative reduction of residual norms, i.e.,
\[
 \|r_k\|/\|r_0\|<10^{-7},
\]
where $r_k$ is the global residual vector at $k$-th iteration and $r_0$ denotes the initial residual vector.  
We will use right-preconditioned GMRES and choose the same stopping criterion as the stand-alone solvers.

\subsection{1D Examples with Direct Subdomain Solvers} \label{num1D}
We first study one-dimensional examples to numerically verify our proposed methods.
In 1D cases, all sub-domain systems are accurately solved up to a machine precision,
with the approximation errors in solving sub-domain systems far less than {those}
caused by the domain decomposition iterations. 
This allows us to focus on evaluating the convergence performance of the domain decomposition algorithms themselves.

\textbf{Example 1.} Let $\Omega=(0,1)$, $T=4$, and $\gamma=10^{-2}$.
Choose $f$, $g$, and $y_0$ in (\ref{opt1A}) so that the exact solution is given by
$$y(x,t)=\cos(\pi t)\sin(\pi x_1)
 \tn{and}
p(x,t)=\sin(\pi t)\sin(\pi x_1).$$

\begin{figure}[H]
\centering 
\includegraphics[width=0.45\textwidth]{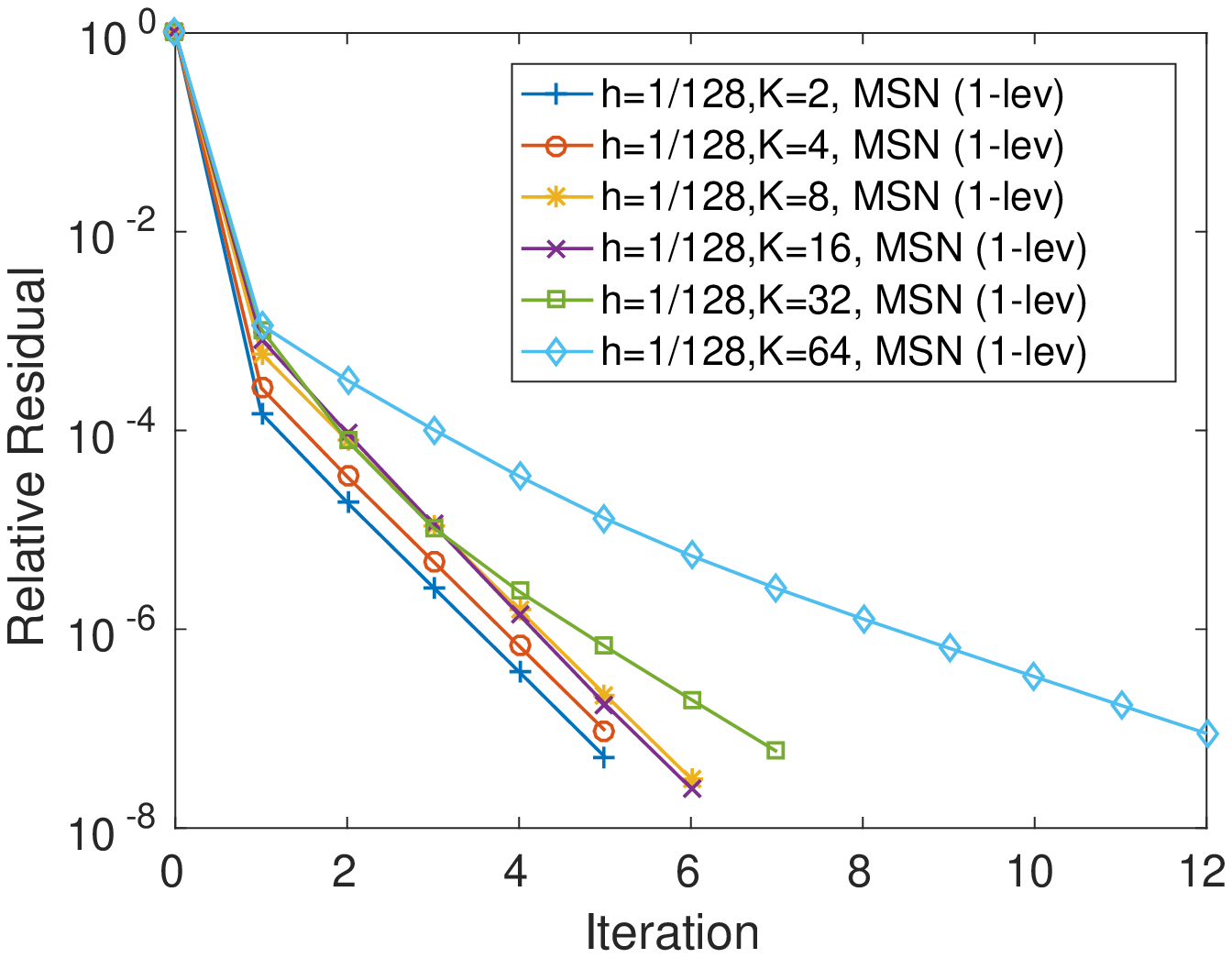}
\includegraphics[width=0.45\textwidth]{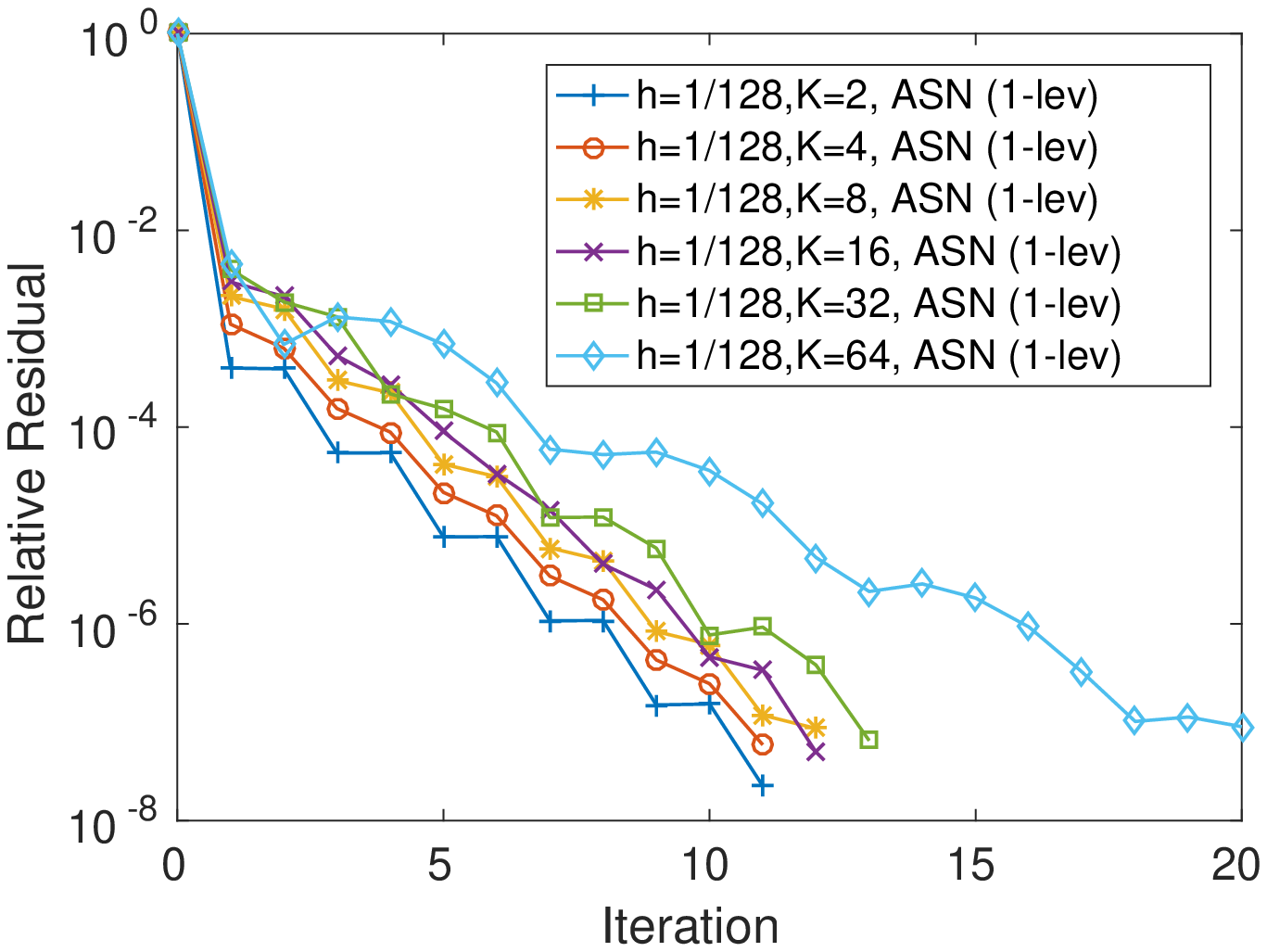}
\caption{Convergence of 1-level MSN and ASN as stand-alone solvers. }
\label{fig:ResMSNSolver}
\end{figure}
\begin{figure}[H]
\centering 
\includegraphics[width=0.45\textwidth]{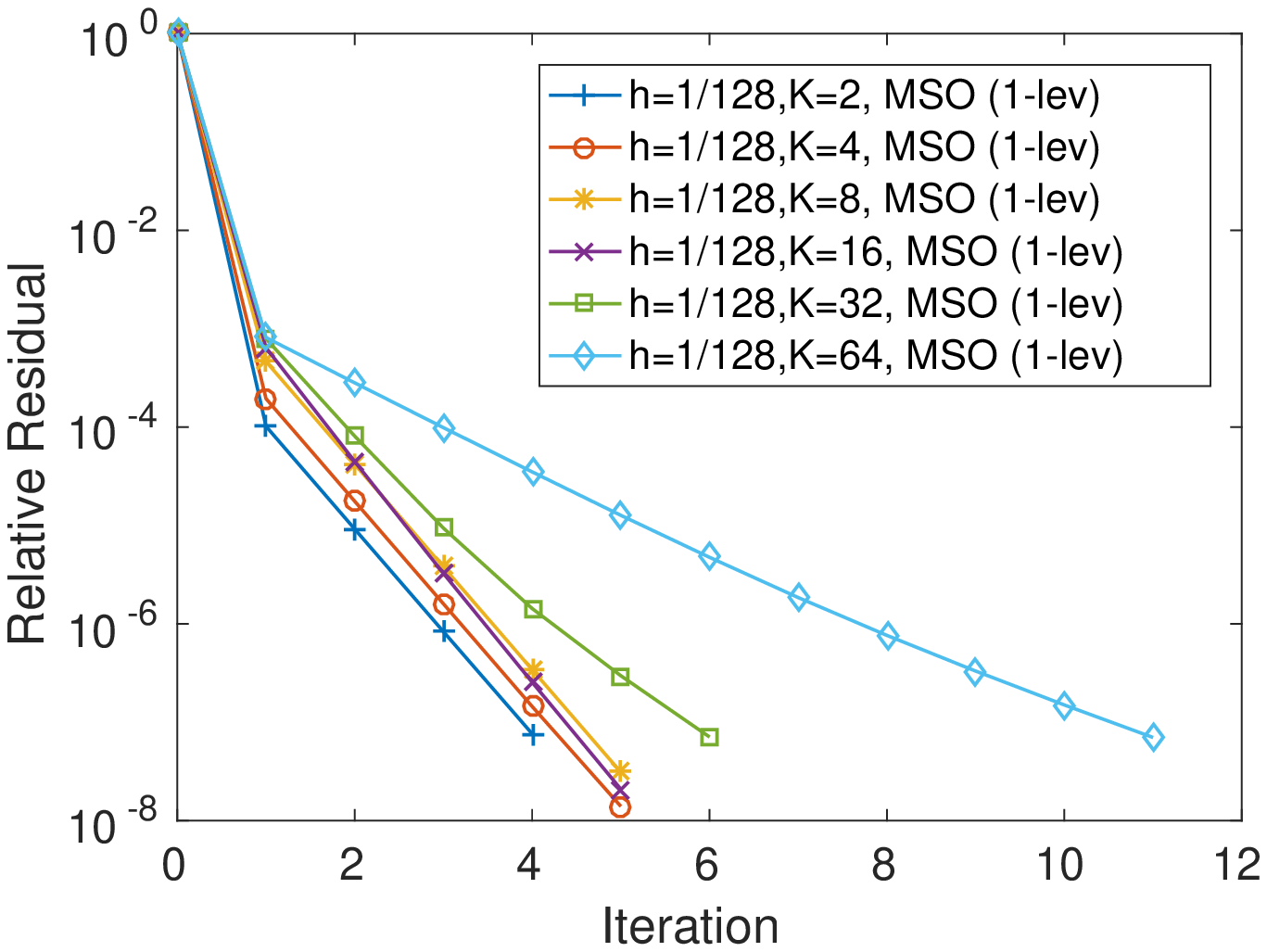}
\includegraphics[width=0.45\textwidth]{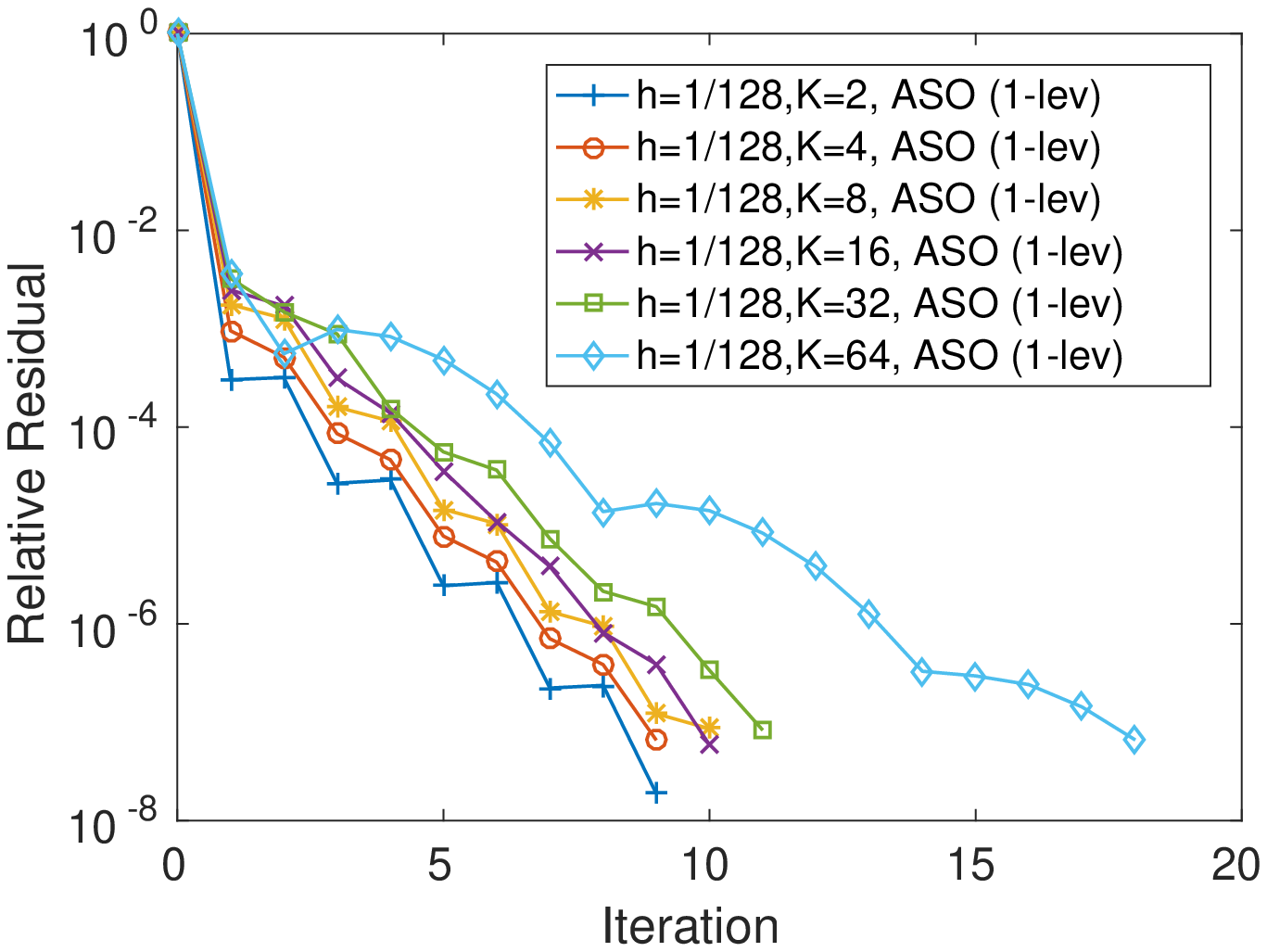}
\caption{Convergence of 1-level MSO and ASO as stand-alone solvers. }
\label{fig:ResMSOSolver}
\end{figure}

In Figure \ref{fig:ResMSNSolver} and Figure \ref{fig:ResMSOSolver}, 
we show the convergence of 1-level  MSN, ASN, MSO, and ASO algorithms
as stand-alone solvers, respectively. 
We observe that (i) the multiplicative solvers (MSN, MSO) require less iterations and therefore converge faster than the corresponding additive solvers (ASN, ASO); 
 (ii) for both multiplicative and additive solvers, 
the corresponding overlapping solvers (MSO, ASO) have slightly faster convergence than the nonoverlapping ones (MSN, ASN) 
as the number of subdomains $K$ is increased;
Nevertheless, for all the solvers, the required iteration numbers show evident growth as the number of subdomains $K$ increases up to $64$.
{This} is expected for such 1-level algorithms since more subdomains imply that more iterations are needed to exchange new approximations 
computed locally on one subdomain to all the other subdomains.
In both figures, we observe that multiplicative algorithms converge monotonically,
while additive algorithms show certain irregular non-monotonic decreasing in the computed residual norms.
Such a difference can be briefly explained by a simple comparison of the proofs in both Theorem \ref{thmASN} and \ref{thmMSN},
where the MSN algorithm has strictly monotonic decrease in both error term $e_1^{k}(T_1)$ and $w_2^{k}(T_1)$ individually
and the ASN algorithm only assures a strictly monotonic decrease in the weighted error term 
$(\gamma \|e_1^{k}(T_1)\|^2+\|w_2^{k}(T_1)\|^2)$.
Especially, when $\gamma$ is small, its scaling effect on the global system with $1/\gamma$ may lead to
our observed non-monotonic decreases of residual norms in the ASN algorithm as stand-alone solvers.
{Furthermore, we report in Table \ref{T1A} the required iteration numbers and CPU time (in seconds),
where the corresponding computation time is based on our sequential MATLAB codes.
We would expect significant reduction in computation time with respect to $K$ when our algorithms are implemented in parallel codes.}

\begin{table}[h!] 
\centering
\caption{{The iteration numbers and CPU time of 1-level MSN, ASN, MSO, and ASO as stand-alone solvers ($h=1/128$).}}
\begin{tabular}{|c|cc|cc|cc|cc|}\hline 
&\multicolumn{2}{c}{MSN}&\multicolumn{2}{|c|}{ASN} 
&\multicolumn{2}{c}{{MSO}}&\multicolumn{2}{|c|}{ASO}\\
\hline
$K$&Iter&CPU&Iter&CPU&Iter&CPU&Iter&CPU \\ \hline
 2&	 5&	3.661   &	 11&	7.933    &	 4&	3.404    &	 9&	    7.237 \\
4&	 5&	4.053   &	 11&	8.422    &	 5&	3.325    &	 9&	    5.923 \\
8&	 6&	3.334   &	 12&	6.577    &	 5&	2.868    &	 10&	5.702 \\
16&	 6&	3.079   &	 12&	6.082    &	 5&	2.719    &	 10&	5.512 \\
32&	 7&	3.102   &	 13&	5.628    &	 6&	3.011    &	 11&	5.734 \\
64&	 12&	4.319   &	 20&	7.088    &	 11&	4.995    &	 18&	8.456 \\
\hline
\end{tabular}
\label{T1A}
 \end{table}

\begin{table}[h!] 
\centering
\caption{{The iteration numbers and CPU time of 2-level MSN, ASN, MSO, and ASO as stand-alone solvers ($h=1/128$).}}
\begin{tabular}{|c|cc|cc|cc|cc|}\hline 
&\multicolumn{2}{c}{MSN}&\multicolumn{2}{|c|}{ASN} 
&\multicolumn{2}{c}{{MSO}}&\multicolumn{2}{|c|}{ASO}\\
\hline
$K$&Iter&CPU&Iter&CPU&Iter&CPU&Iter&CPU \\ \hline
 2&	 6&	4.464    &	 10&7.287    &	 5&	4.174    &	 8&	6.753 \\
4&	 6&	4.950    &	 10&7.921    &	 5&	3.420    &	 8&	5.444 \\
8&	 8&	4.607    &	 10&6.056    &	 6&	3.561    &	 9&	5.402 \\
16&	 8&	4.455    &	 10&5.840    &	 7&	4.076    &	 8&	4.772 \\
32&	 8&	4.449    &	 9&	4.982    &	 6&	3.579    &	 8&	4.990 \\
64&	 6&	3.950    &	 7&	4.556    &	 5&	3.695    &	 6&	4.543 \\
\hline
\end{tabular}
\label{T1B}
 \end{table}

{To obtain a} more robust convergence rate that is independent of the number of subdomains $K$,
2-level algorithms based on some appropriately chosen coarse space correction are often utilized to suppress the increasing iteration numbers.
In Figure \ref{fig:ResMSNSolver2} and Figure \ref{fig:ResMSOSolver2}, 
we show the convergence of our implemented 2-level  MSN, ASN, MSO, and ASO algorithms
as stand-alone solvers, respectively. 
Here we used a standard coarse level space of a small size $K$ based {on} linear interpolation under the framework of Galerkin method, 
as discussed in \cite{Martin2012,Gander2014}. 
Compared to the above 1-level algorithms, we do observe that the required iteration numbers
are almost independent of the increasing number of subdomains $K$, which is critical to the overall performance of the corresponding massively parallel implementations.
Moreover, a coarse space correction step seems to be more effective in improving additive algorithms than the multiplicative ones.
{We also report in Table \ref{T1B} the required iteration numbers and sequential CPU time (in seconds).
In general, two-level algorithms are more efficient than one-level ones as the number of subdomains gets larger.
Even in the setting of sequential computing, we do observe the advantage of 2-level algorithms over 1-level algorithms.}

\begin{figure}[H]
\centering 
\includegraphics[width=0.45\textwidth]{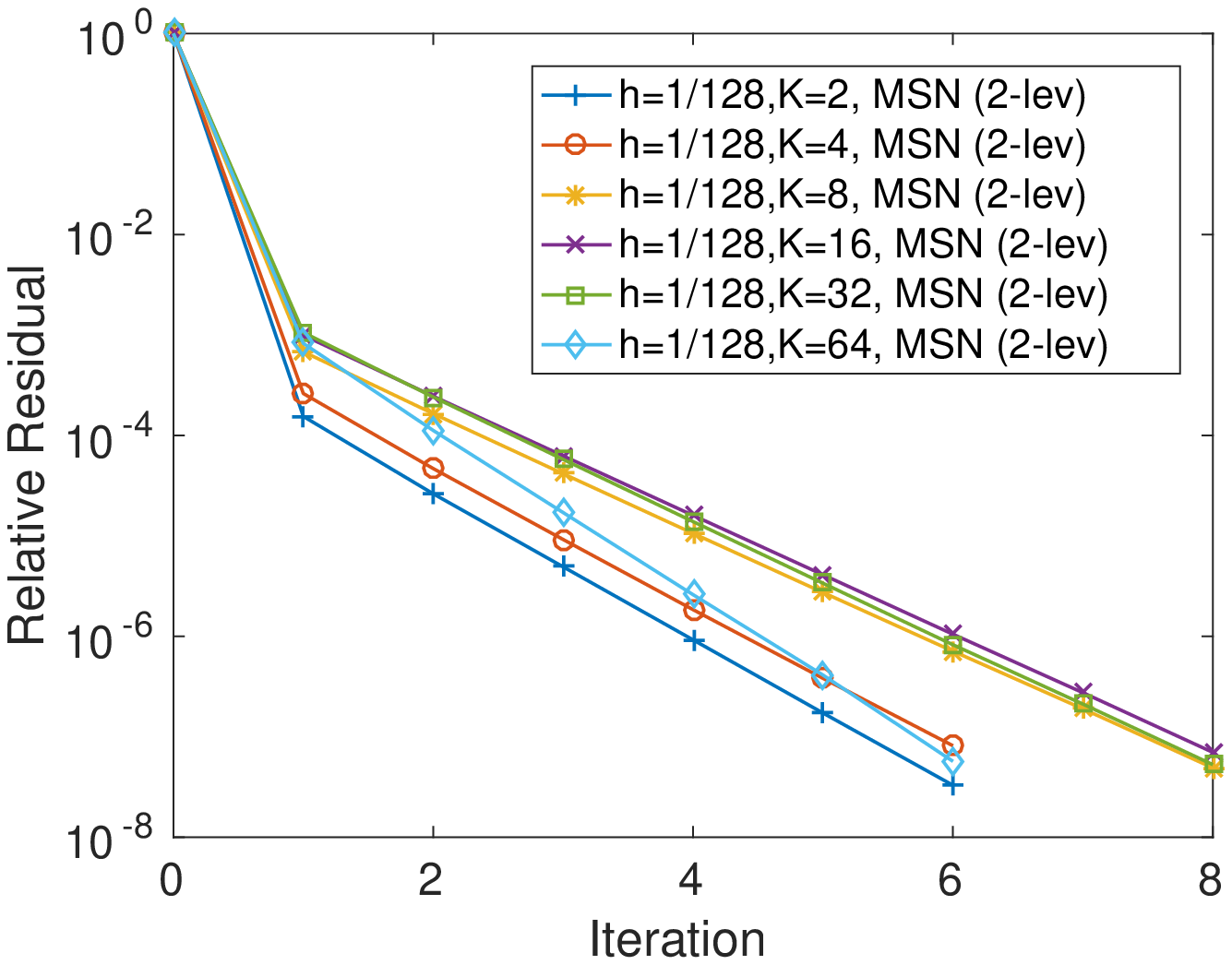}
\includegraphics[width=0.45\textwidth]{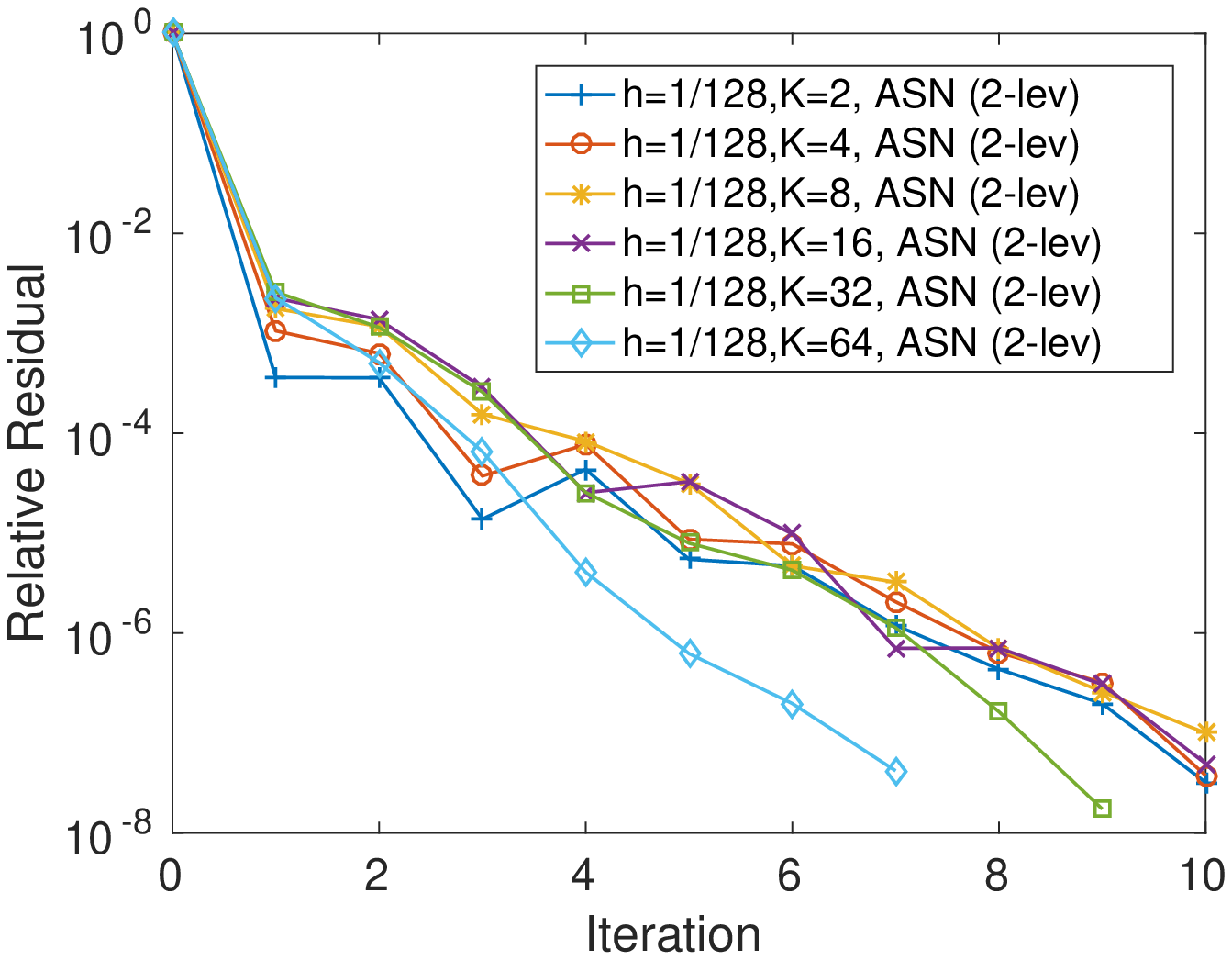}
\caption{Convergence of 2-level MSN and ASN as stand-alone solvers. }
\label{fig:ResMSNSolver2}
\end{figure}
\begin{figure}[H]
\centering 
\includegraphics[width=0.45\textwidth]{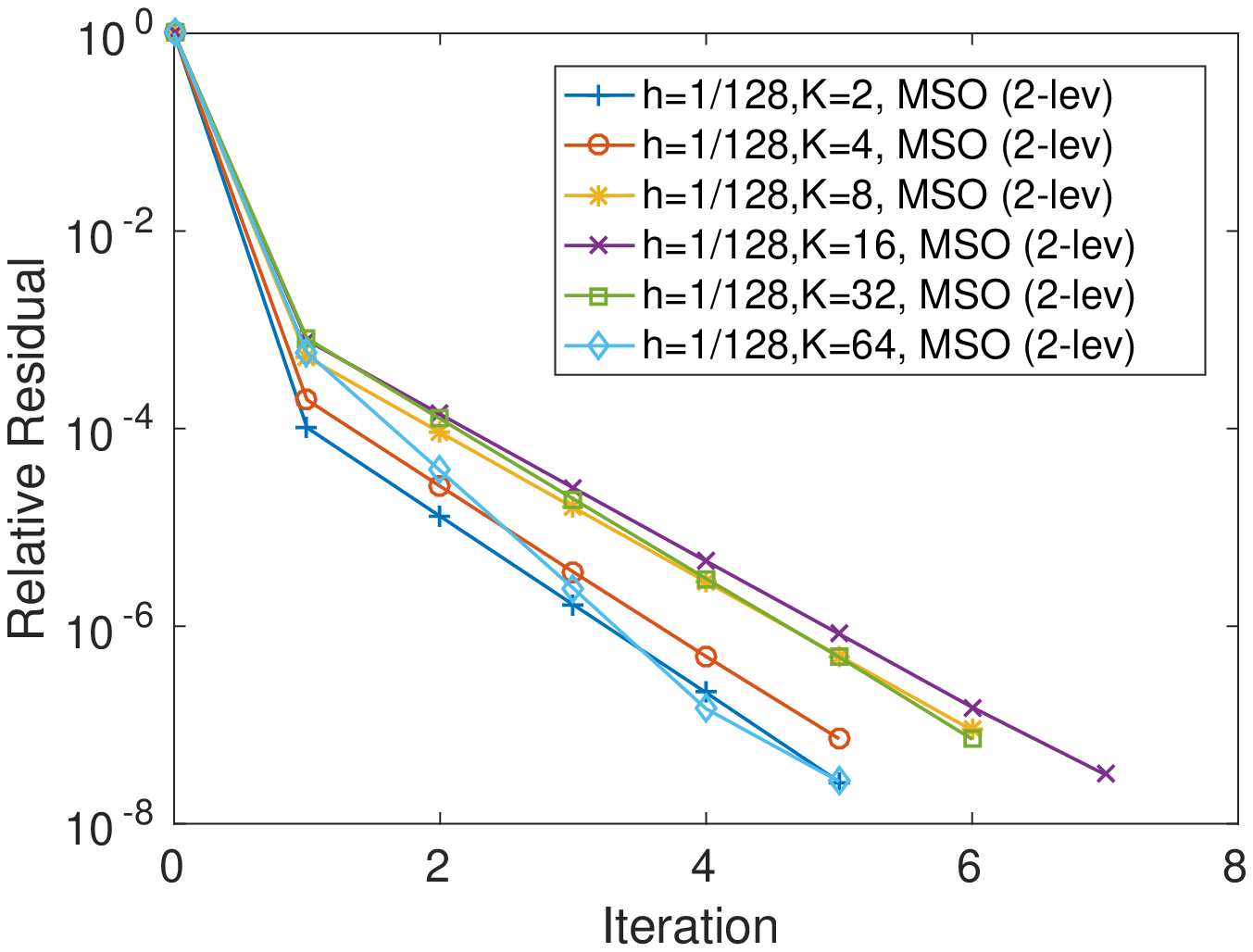}
\includegraphics[width=0.45\textwidth]{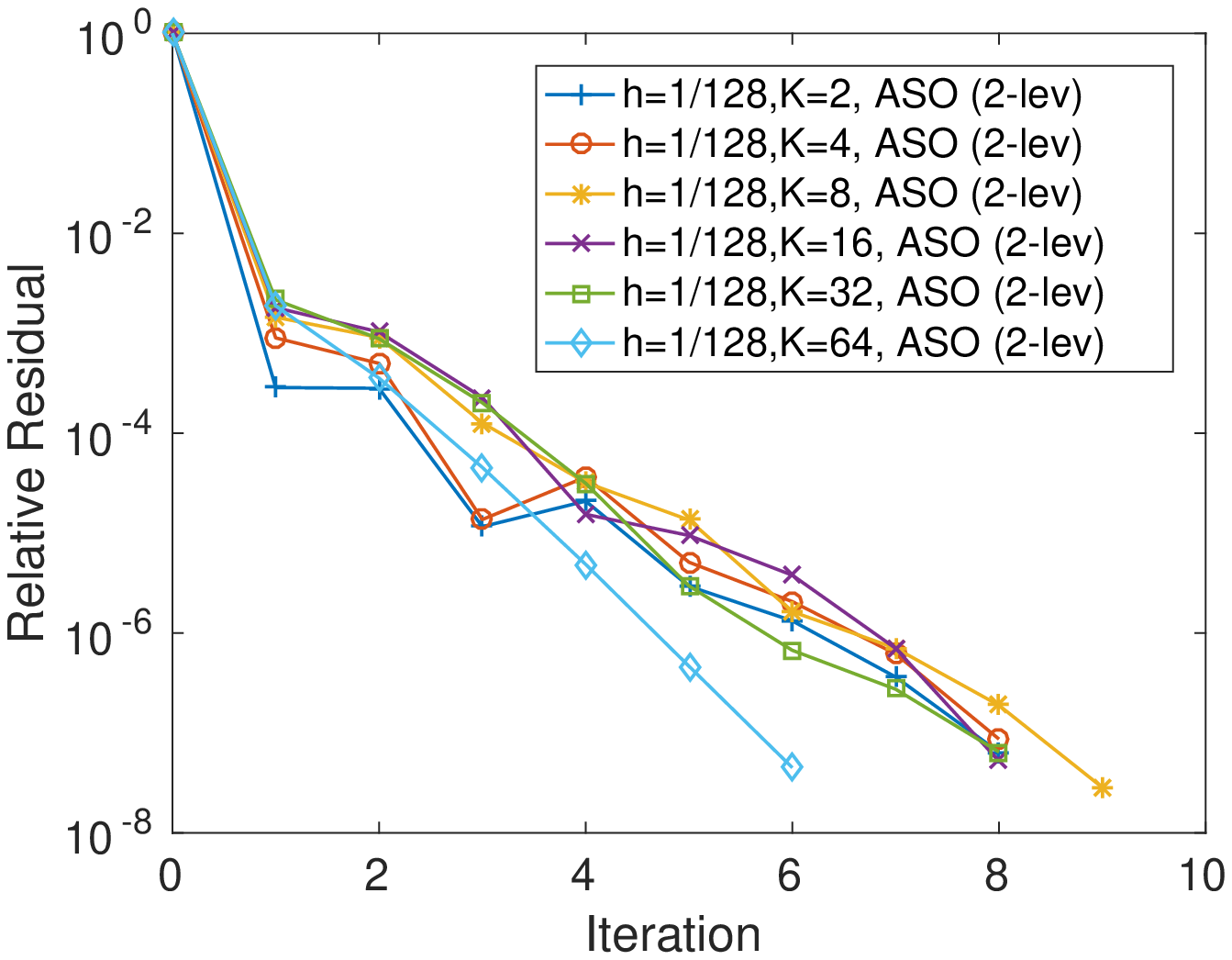}
\caption{Convergence of 2-level MSO and ASO as stand-alone solvers. }
\label{fig:ResMSOSolver2}
\end{figure}

Although theoretically sound, the above illustrated 1-level and 2-level algorithms as stand-alone solvers are seldomly used in practice,
since the overall convergence rate is very problem-dependent and sensitive to the choice of coarse level space.
If the coarse space is not appropriately chosen, the stand-alone solver may become inefficient or even divergent.
Therefore, these iterative algorithms are always used as effective preconditioners for some appropriate Krylov subspace methods, which often
demonstrate more robust convergence rates than as stand-alone solvers.
In Figure \ref{fig:ResPrecASN1}, we plot the convergence of GMRES with no preconditioner (No Prec.) 
and with our proposed 1-level ASN preconditioner (with $K=16$), respectively, for various refined mesh step sizes.
Without any preconditioner, the GMRES demonstrates a clearly deteriorated convergence rate as the step size $h=\tau$ is halved,
which in fact leads to a four times larger system of linear equations with roughly four time larger condition number.
However, the GMRES with {a} 1-level ASN preconditioners shows mesh-independent convergence,
which {one} attributes to the corresponding 1-level ASN stand-alone solver having a mesh-independent convergence rate.
In contrast to the possible non-monotonic decay of residual norms in the ASN algorithm as a stand-alone solver,
the preconditioned GMRES solver yields a robust monotonic decaying residual norms.
\begin{figure}[H]
\centering 
\includegraphics[width=0.45\textwidth]{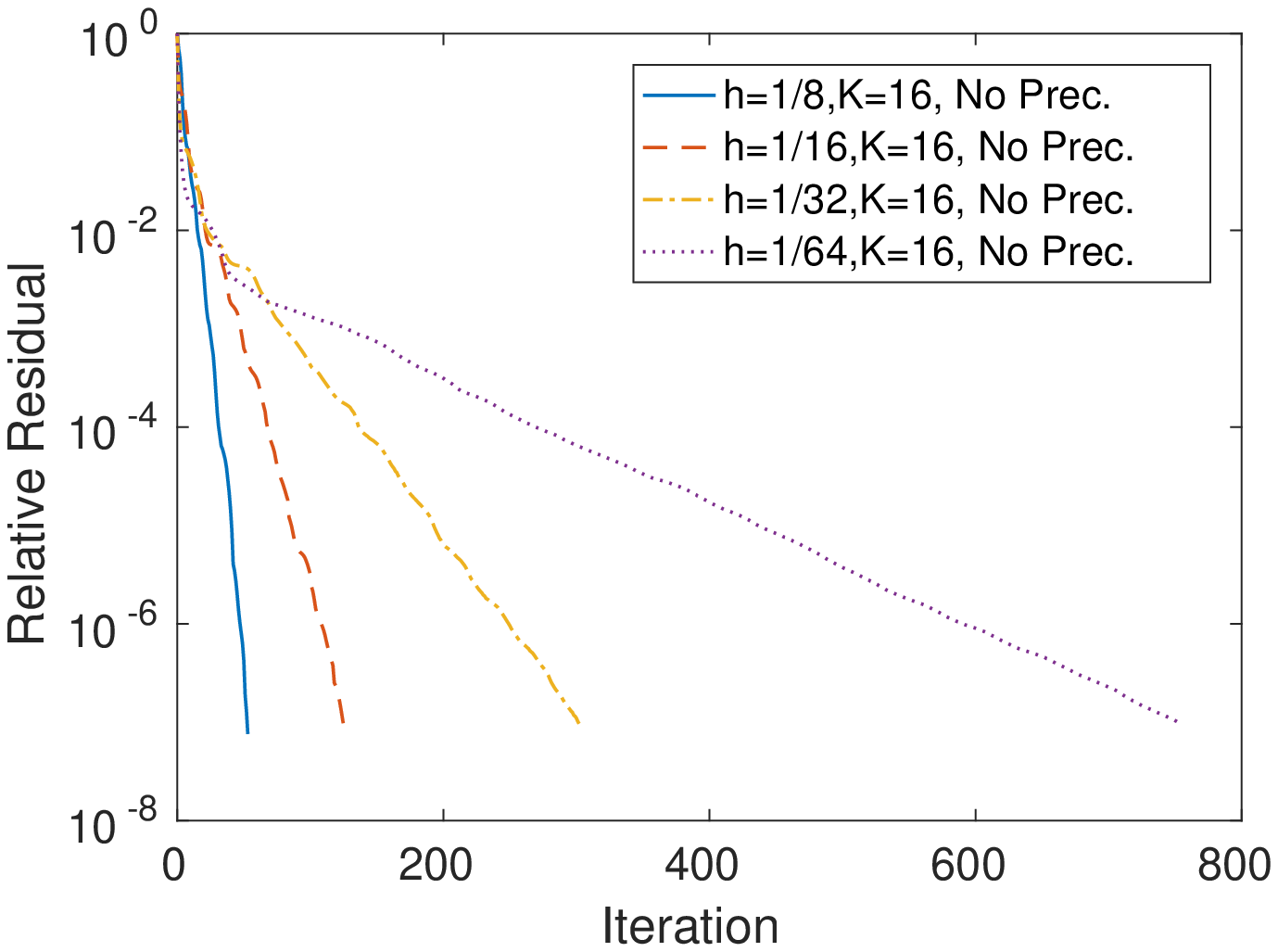}
\includegraphics[width=0.45\textwidth]{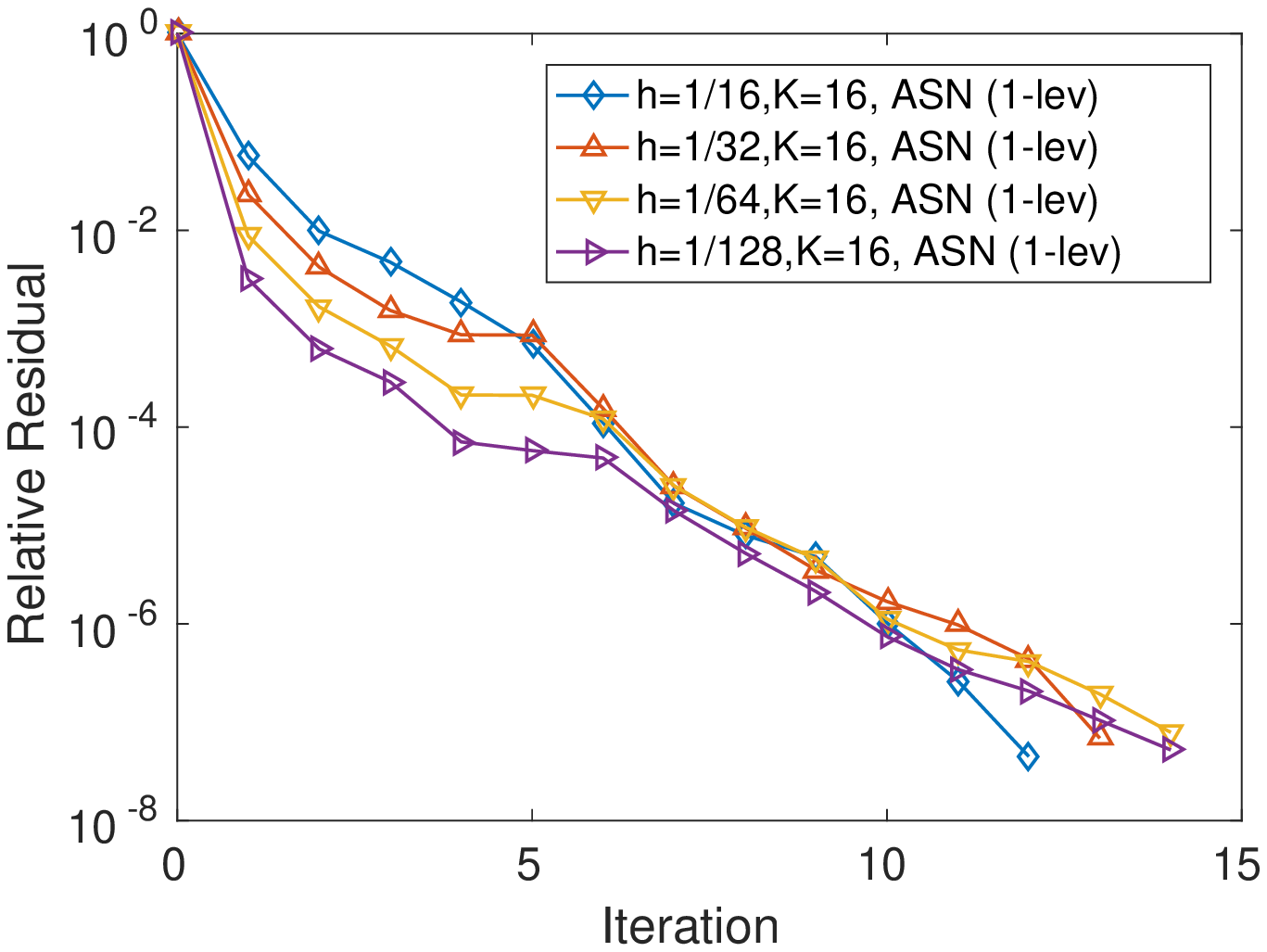}
\caption{Convergence of GMRES with no preconditioner and 1-level ASN ($K=16$) preconditioners, respectively. }
\label{fig:ResPrecASN1}
\end{figure}

We further examine the convergence performance of such 1-level domain decomposition preconditioners with respect to $K$. 
In Figures \ref{fig:ResMSNPre}--\ref{fig:ResMSOPre}, 
we plot the convergence of right-preconditioned GMRES with 1-level MSN, ASN, MSO, and ASO algorithms as preconditioners for $K$ different numbers of subdomains, respectively.
Similar to what we have observed in 1-level stand-alone solvers, the convergence rates of the preconditioned GMRES with 1-level preconditioners are quite satisfactory for small $K$,
but all of them become evidently slower as the number of subdomains $K$ is increased above 16.
\begin{figure}[H]
\centering 
\includegraphics[width=0.45\textwidth]{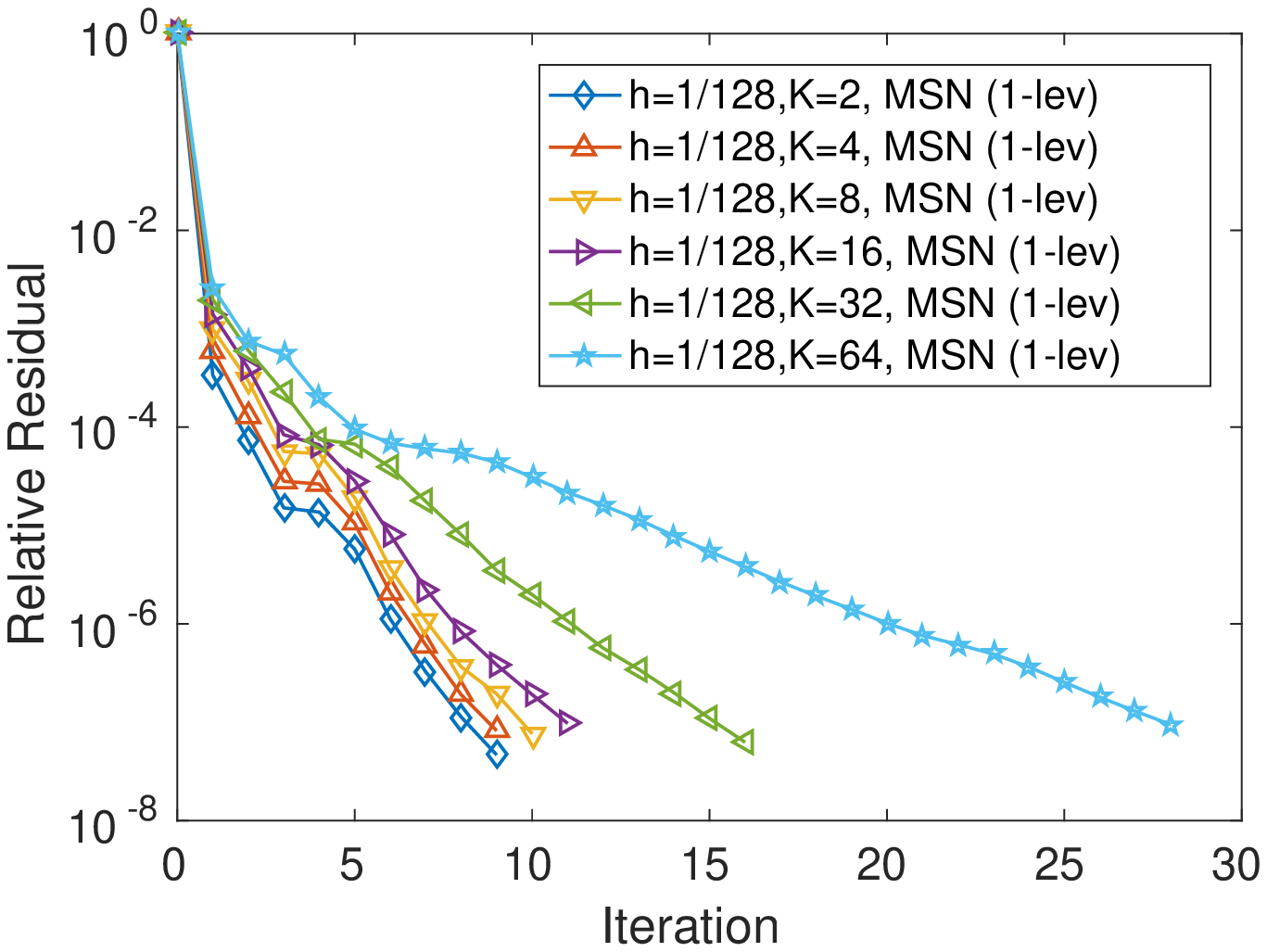}
\includegraphics[width=0.45\textwidth]{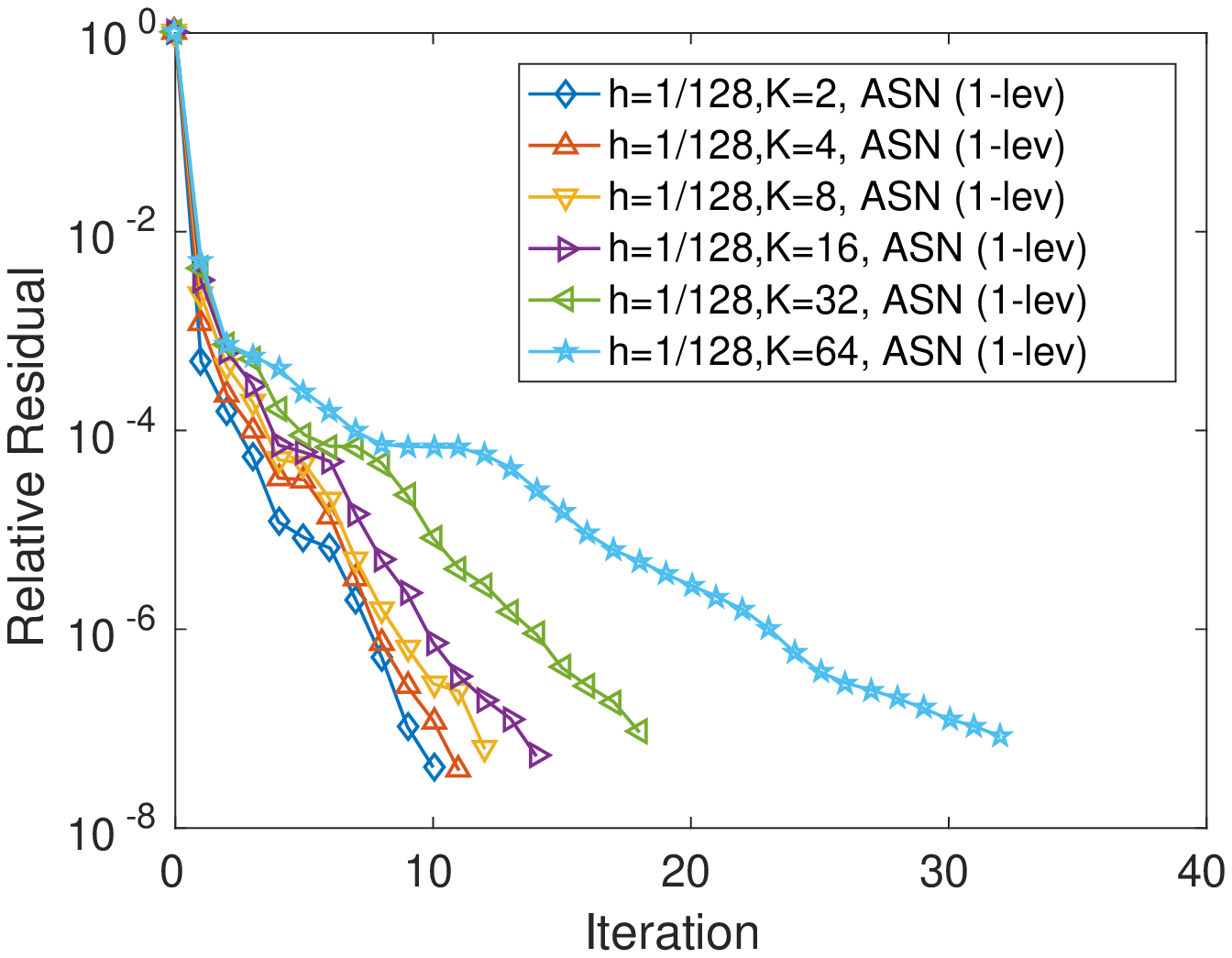}
\caption{Convergence of GMRES with 1-level MSN and ASN preconditioners, respectively. }
\label{fig:ResMSNPre}
\end{figure}
\begin{figure}[H]
\centering 
\includegraphics[width=0.45\textwidth]{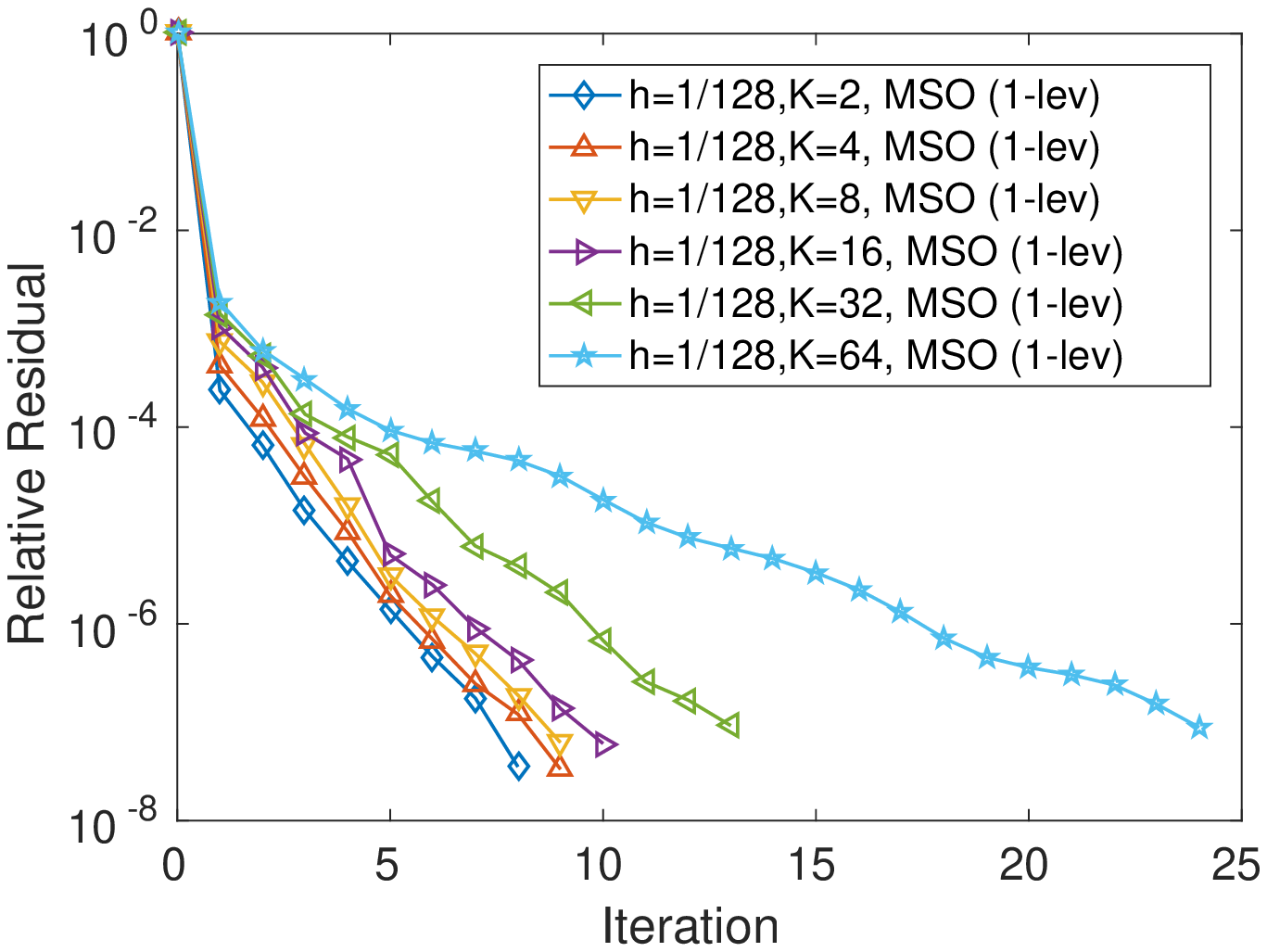}
\includegraphics[width=0.45\textwidth]{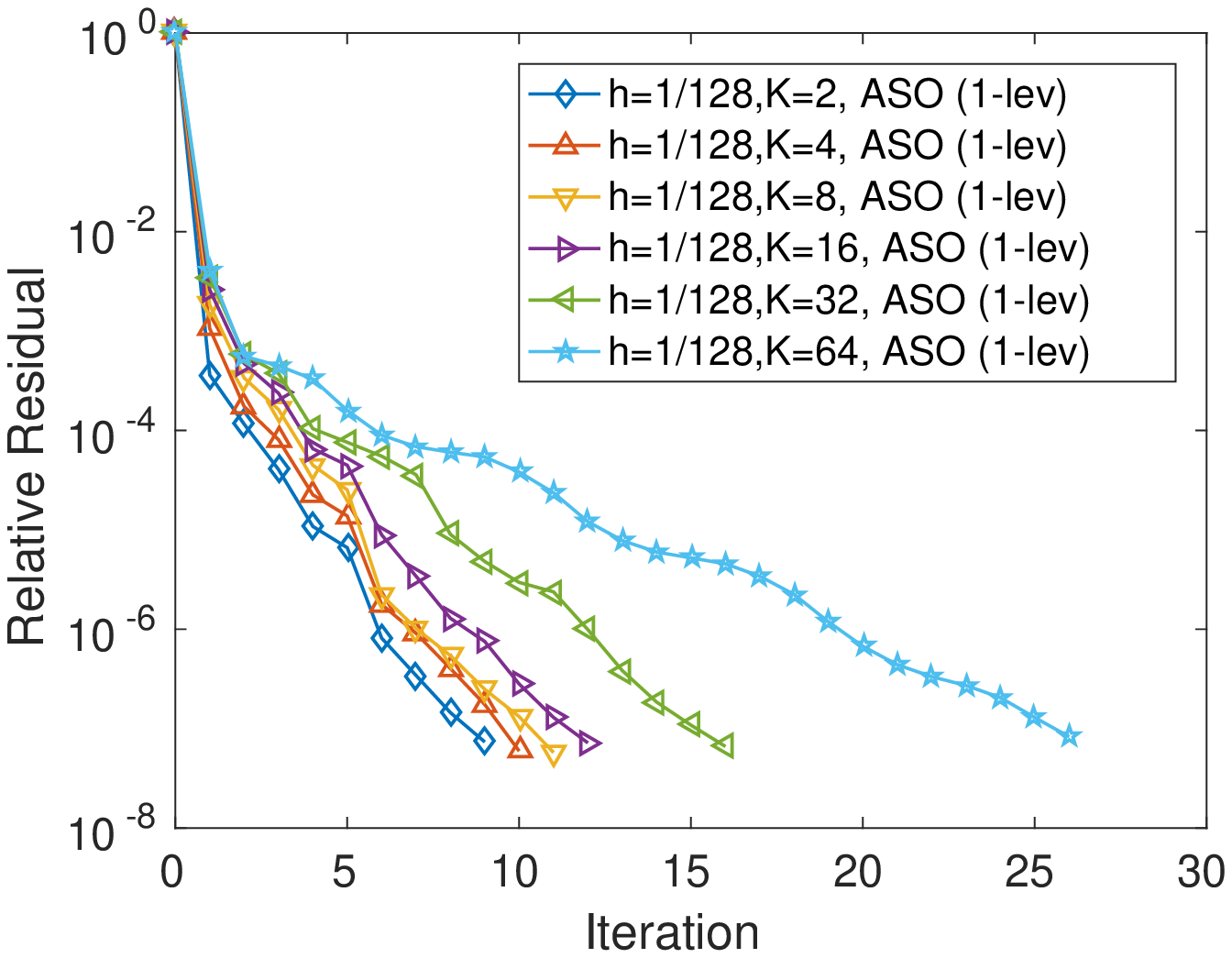} 
\caption{Convergence of GMRES with 1-level MSO and ASO preconditioners, respectively. }
\label{fig:ResMSOPre}
\end{figure}
\begin{figure}[H]
\centering 
\includegraphics[width=0.45\textwidth]{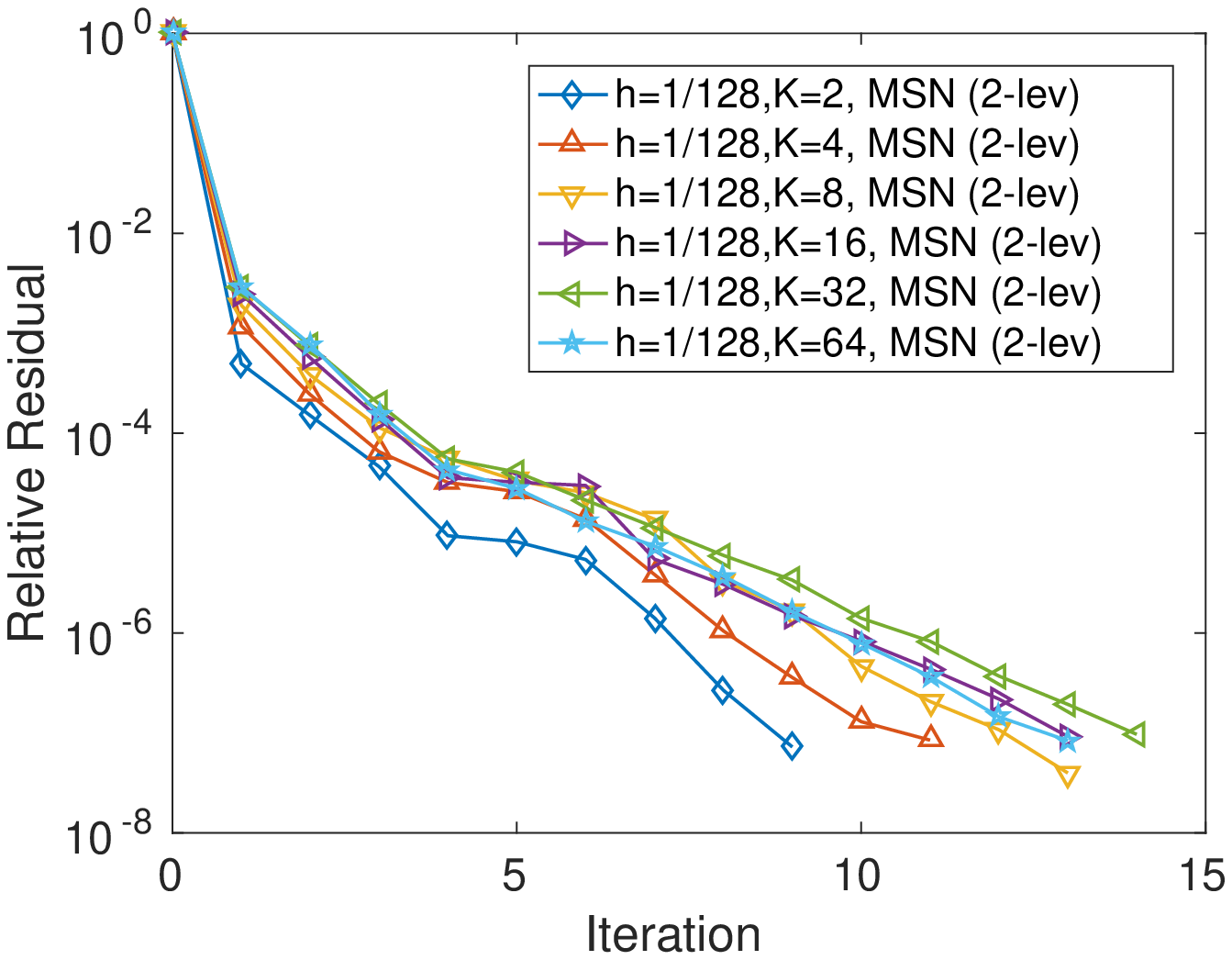}
\includegraphics[width=0.45\textwidth]{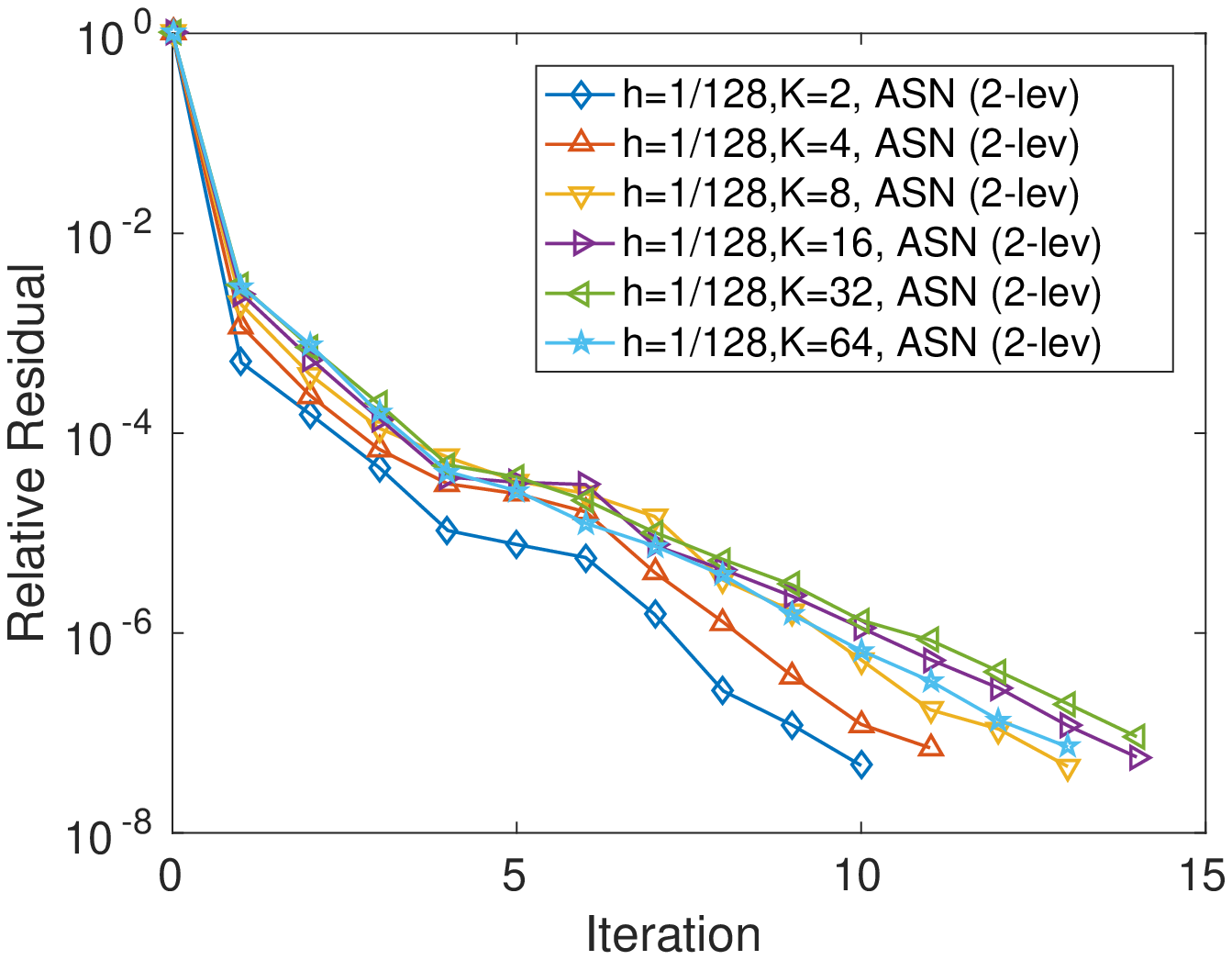} 
\caption{Convergence of GMRES with 2-level MSN and ASN preconditioners, respectively. }
\label{fig:ResMSNPre2}
\end{figure}
\begin{figure}[H]
\centering 
\includegraphics[width=0.45\textwidth]{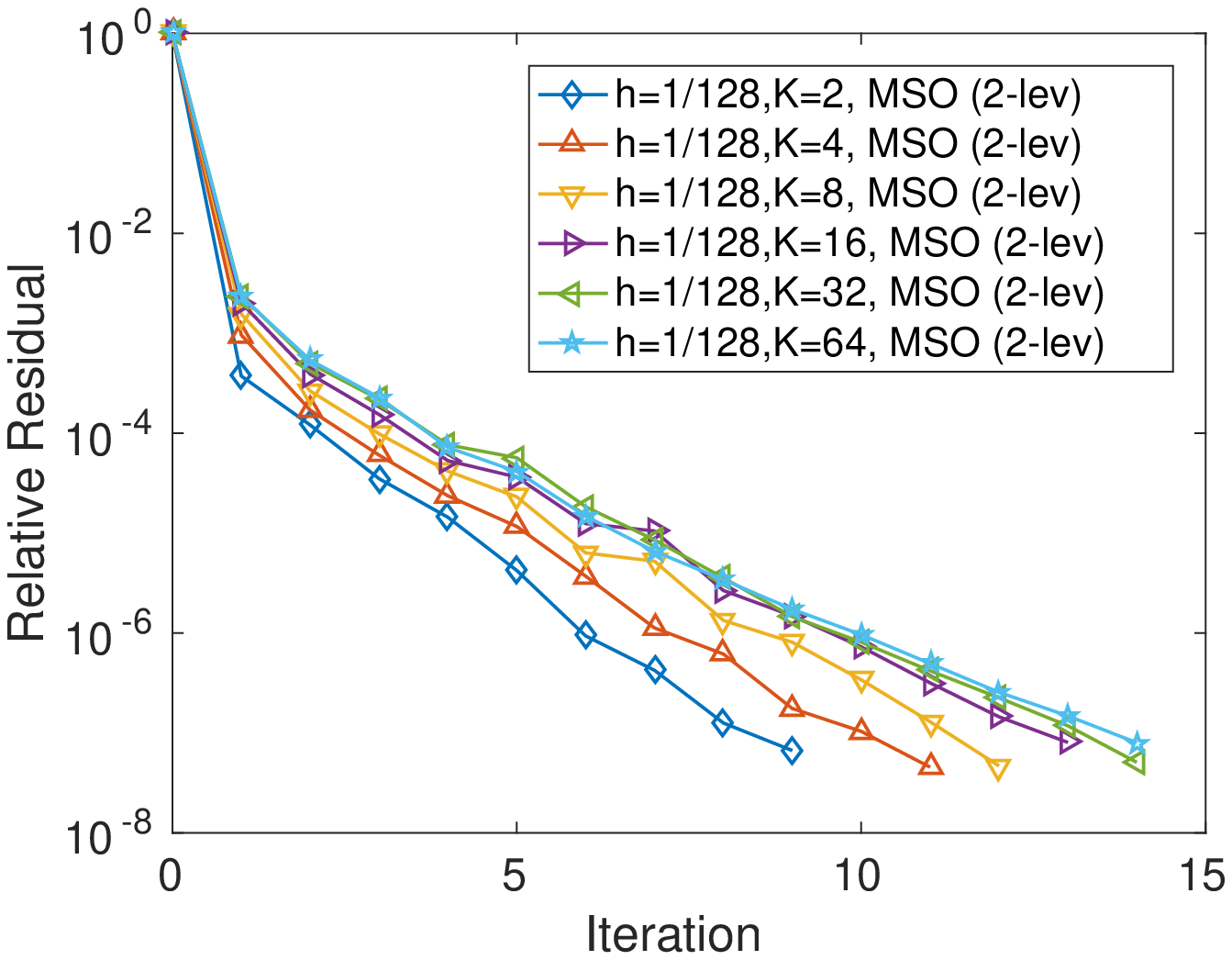}
\includegraphics[width=0.45\textwidth]{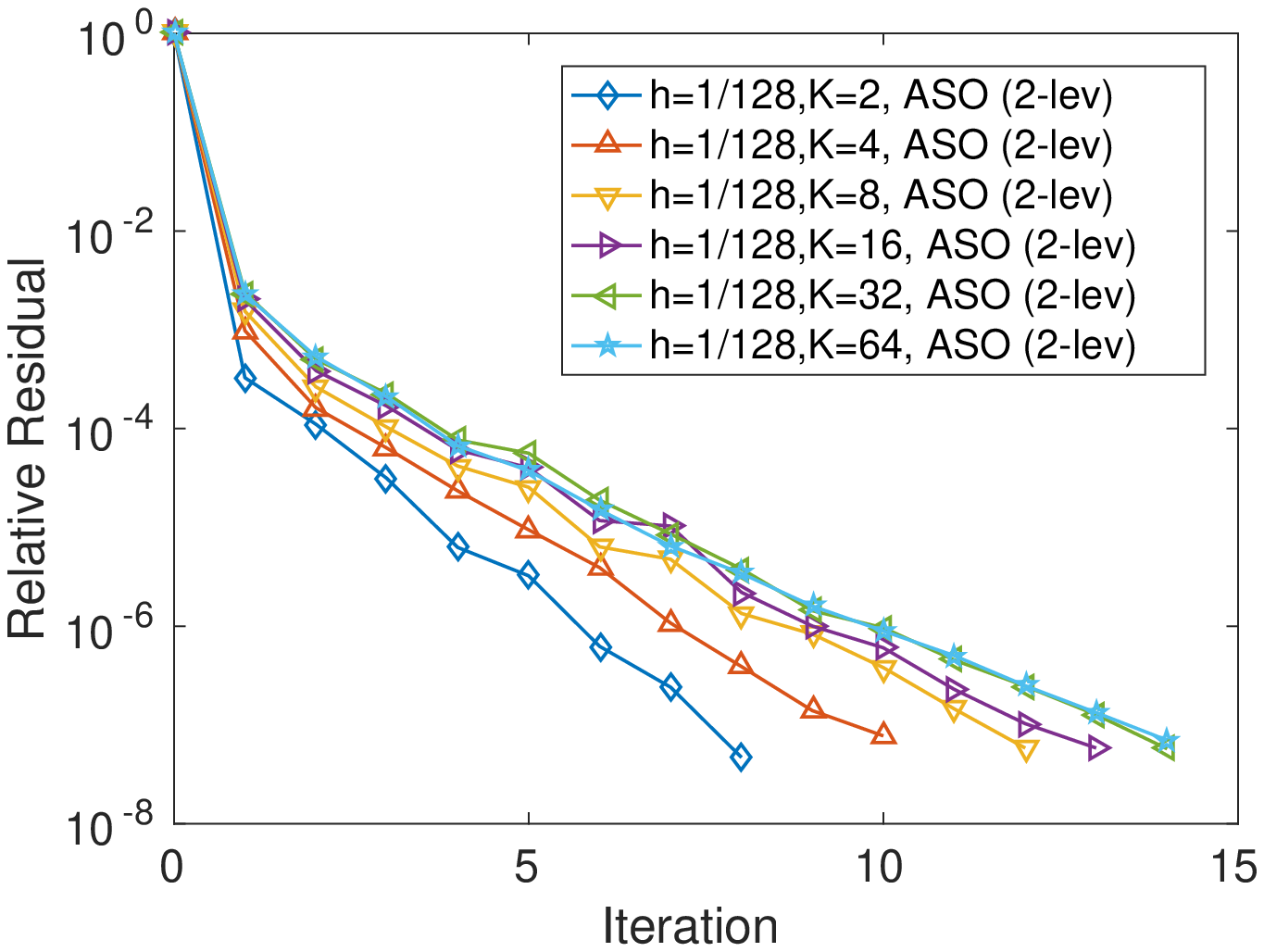} 
\caption{Convergence of GMRES with 2-level MSO and ASO preconditioners, respectively. }
\label{fig:ResMSOPre2}
\end{figure}
Based on the superior convergence performance of the above 2-level domain decomposition algorithms as stand-alone solvers,
it is reasonable to expect such 2-level algorithms work well as preconditioners for GMRES.
As shown in Figures \ref{fig:ResMSNPre2}--\ref{fig:ResMSOPre2}, 
the preconditioned GMRES with 2-level preconditioners indeed deliver much more robust convergence rate for large $K$.
It is worthwhile to point out that 2-level preconditioners based on the coarse space correction may not necessarily take less number of iterations than 1-level {preconditioners} 
when $K$ is relatively small.
This can be explained by the possible large approximation errors introduced by a very coarse grid correction.
However, 2-level preconditioners would become significantly better than 1-level preconditioners for a slightly large $K$.
{For conciseness, the corresponding CPU times are not reported here, since they are very similar to the results given in Tables \ref{T1A} and \ref{T1B}.  }
\begin{figure}[H]
\centering 
\includegraphics[width=0.45\textwidth]{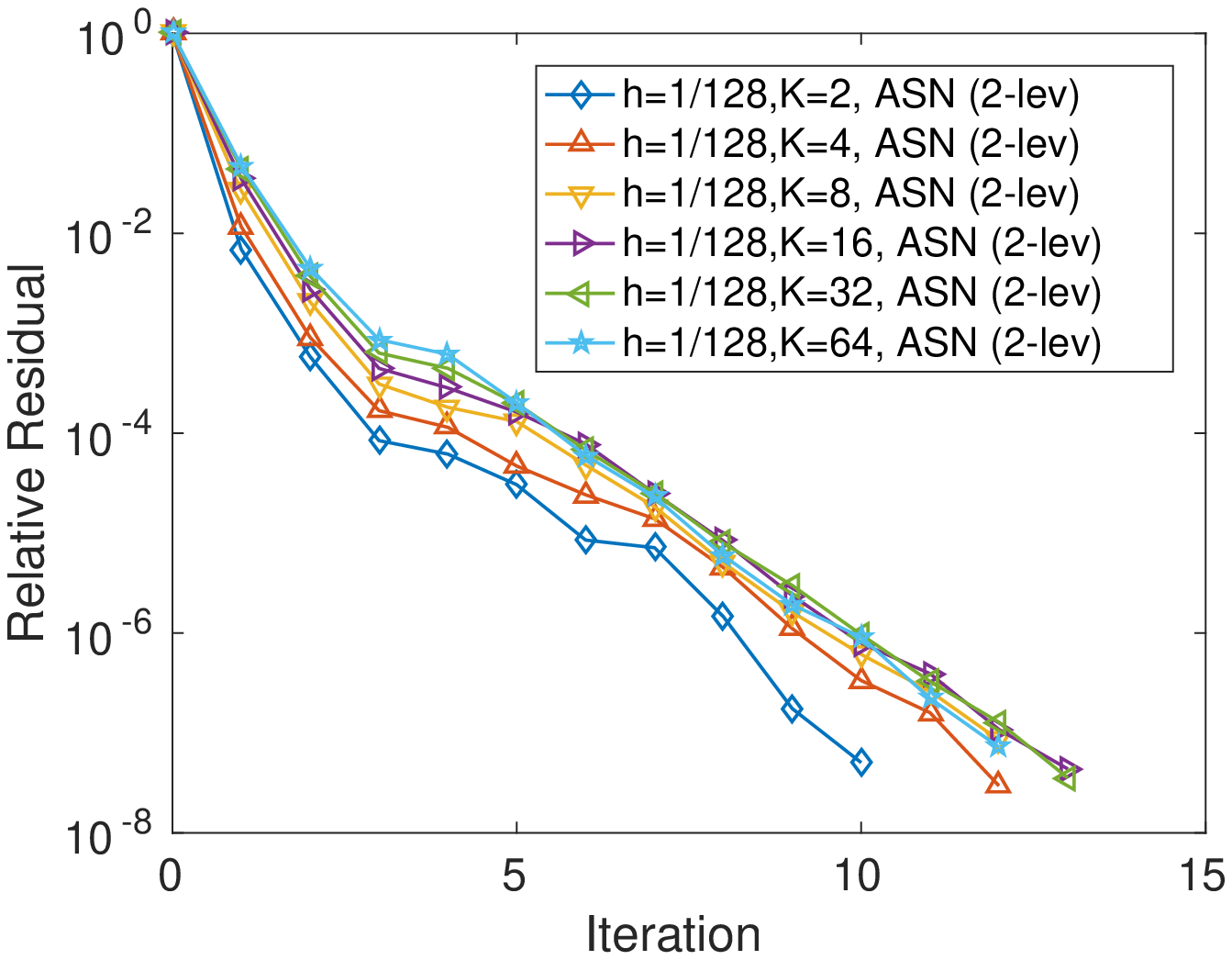}
\includegraphics[width=0.45\textwidth]{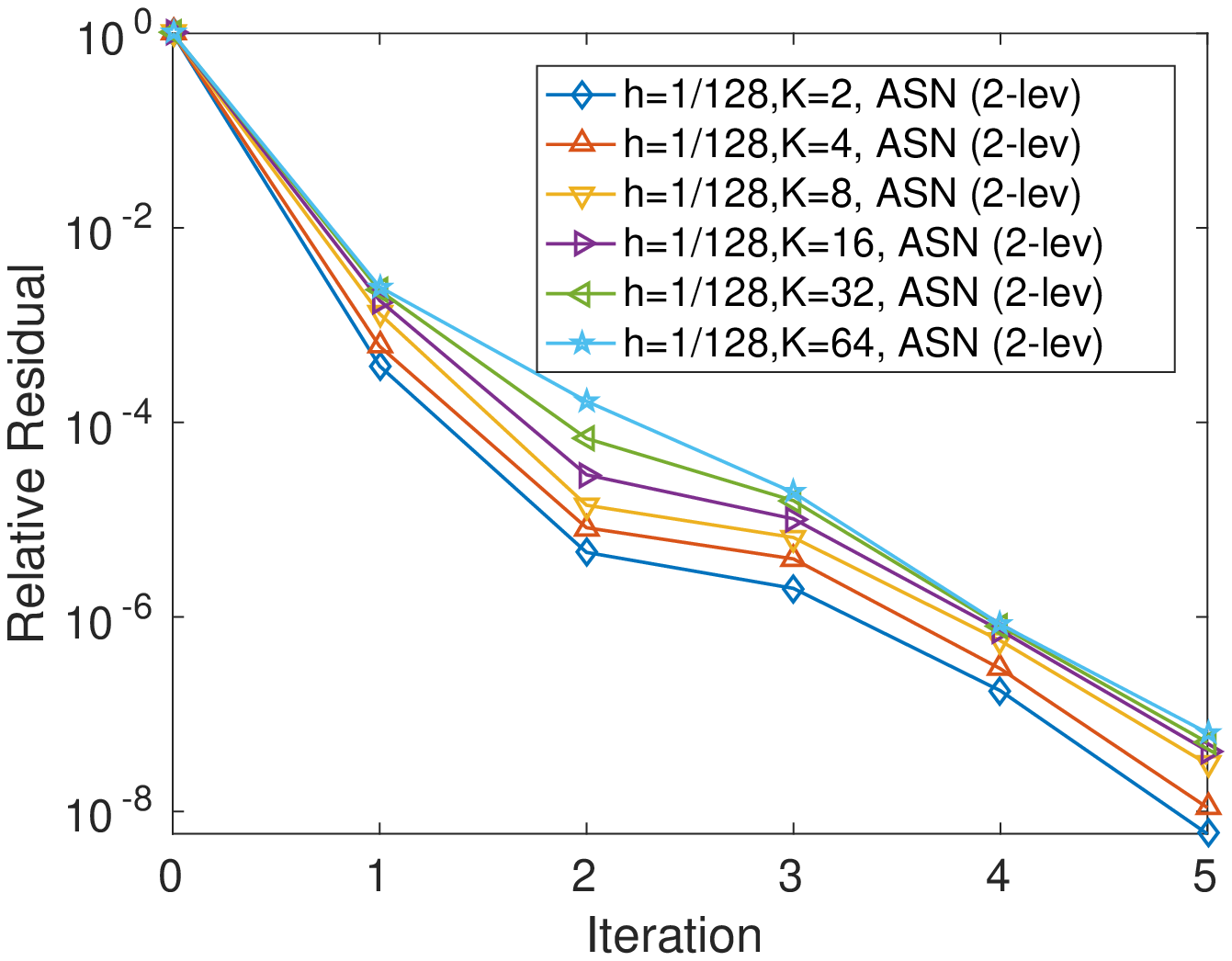} 
\caption{Convergence of GMRES with 2-level ASN preconditioners for $\gamma=10^{-4}$ (left) and  $\gamma=10^{-6}$ (right), respectively. }
\label{fig:ResASN2gam}
\end{figure}
The small parameter $\gamma>0$ is usually chosen to be close to zero,
which leads to the linear system (\ref{linsys}) with a large condition number of $O(1/\gamma)$.
Consequently, it is often of great {interest} to obtain efficient solvers with parameter-robust convergence rates.
In Figure \ref{fig:ResASN2gam}, we show the convergence of GMRES with 2-level ASN preconditioners for a much
smaller parameter $\gamma=10^{-4}$ and $\gamma=10^{-6}$, respectively.
In view of Figure \ref{fig:ResMSNPre2} with $\gamma=10^{-2}$, 
our 2-level ASN preconditioners seem to have very robust convergence rates with respect to decreasing $\gamma$.
{This complements the above shown $h$-independent and $K$-independent convergence rates.}
\subsection{2D Examples with Multigrid Subdomain Solvers} \label{num2D}
Considering that time variable is only one dimensional, the application of our proposed algorithms to 2D or 3D problems is almost the same as the 1D problems.
According to the convergence analysis in Section \ref{sec: itm}, our proposed algorithms 
are also convergent for such parabolic PDE control problems with two and three spatial dimensions.
However, in 2D or 3D cases, subdomain systems become very expensive to be solved by sparse direct methods as we did in the above 1D cases.  
Hence, more efficient iterative solvers will be used for only approximately solving subdomain systems.
More specifically, we will use the semi-coarsening multigrid solver with one V-cycle developed in \cite{LX2015a} as {the} subdomain system solver for our 2D tests.
Notice that using more accurate subdomain system solvers will lead to improved convergence {rates but may require} higher computational costs.
Based on our numerical simulations, one V-cycle multigrid iteration balances both accuracy and efficiency very well.
In addition, the coarse space correction step in 2-level methods also requires fast iterative solvers, since it essentially solves a huge sparse linear system 
from an equivalent 3D discretization with only $O(K)$ grid points in the temporal direction.
We will perform the coarse space correction by solving the coarse system approximately 
with the preconditioned bi-conjugate gradient stabilized method (BiCGStab) 
with an ILU(0) preconditioner (i.e., with 0 level of fill in), a tolerance of $10^{-4}$ and less than $200$ iterations.

\textbf{Example 2.} Let $\Omega=(0,1)^2$, $T=4$, and $\gamma=10^{-2}$.
Choose $f$, $g$, and $y_0$ in (\ref{opt1A}) so that the exact solution is given by
$$y(x,t)=\cos(\pi t)\sin(\pi x_1)\sin(\pi x_2) 
 \tn{and}
p(x,t)=\sin(\pi t)\sin(\pi x_1)\sin(\pi x_2).$$

\begin{figure}[H]
\centering 
\includegraphics[width=0.45\textwidth]{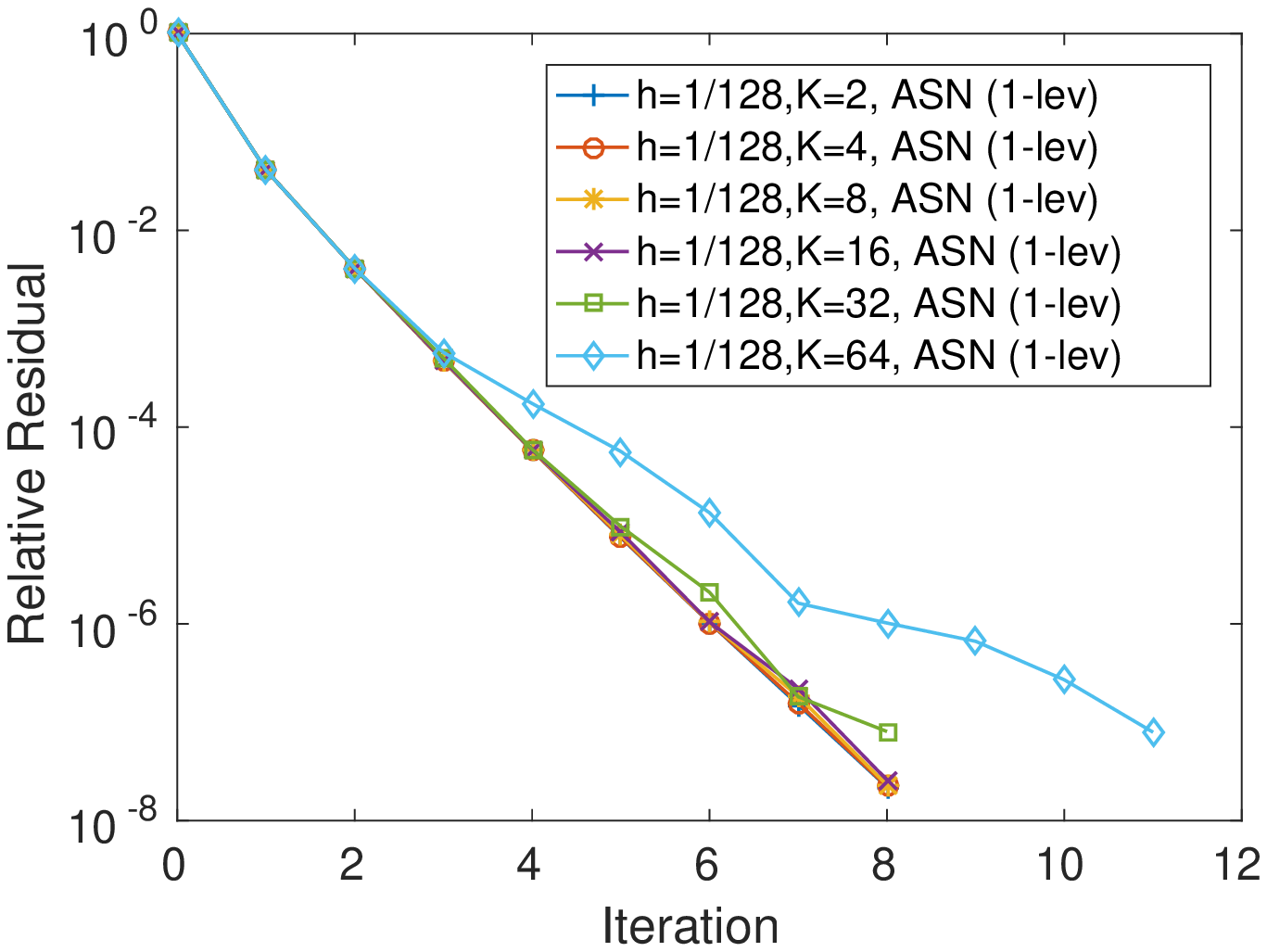}
\includegraphics[width=0.45\textwidth]{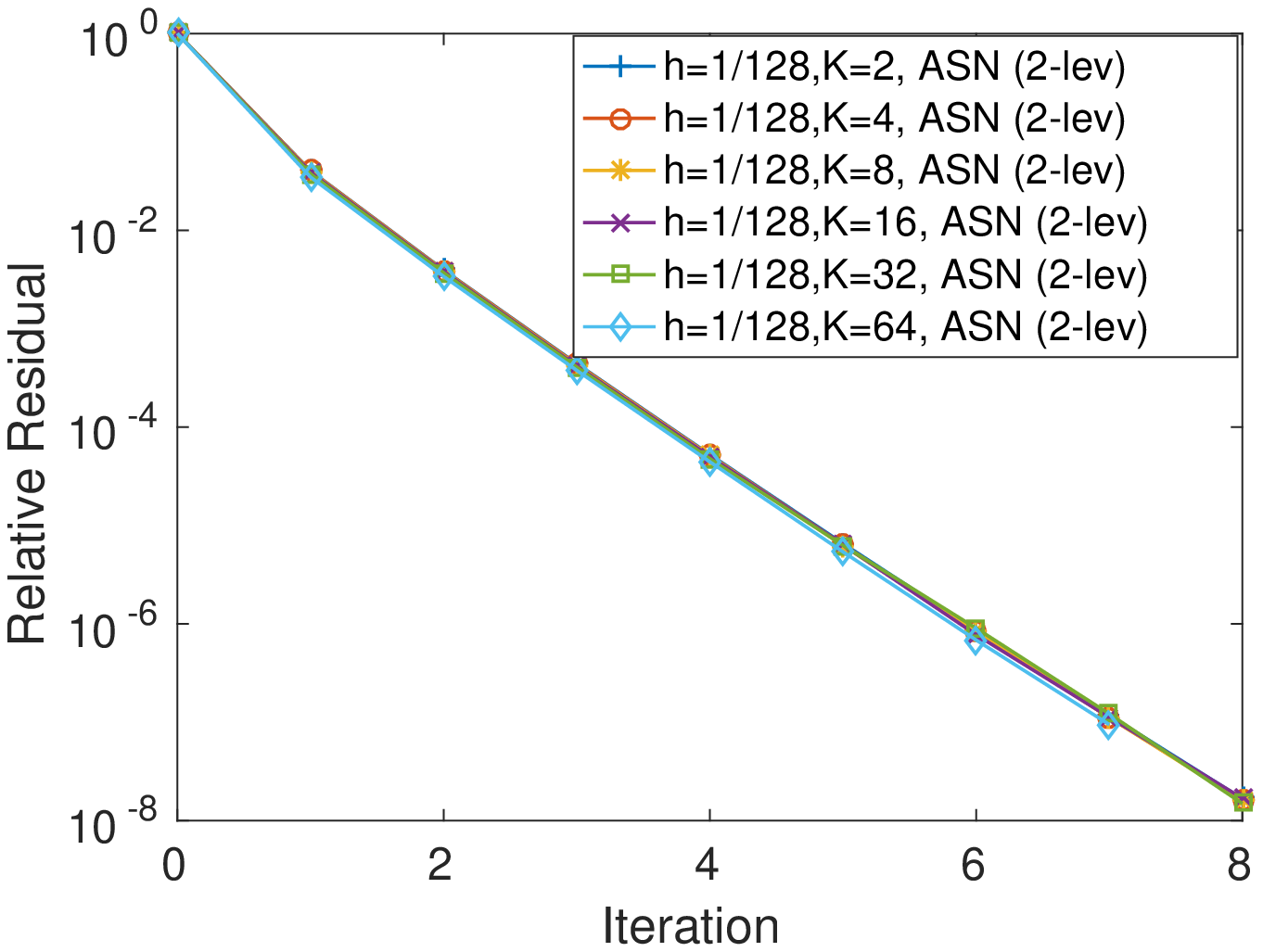}
\caption{Convergence of 1-level and 2-level ASN as stand-alone solvers, respectively. }
\label{fig:ResASNSolver2D}
\end{figure}

\begin{figure}[H]
\centering 
\includegraphics[width=0.45\textwidth]{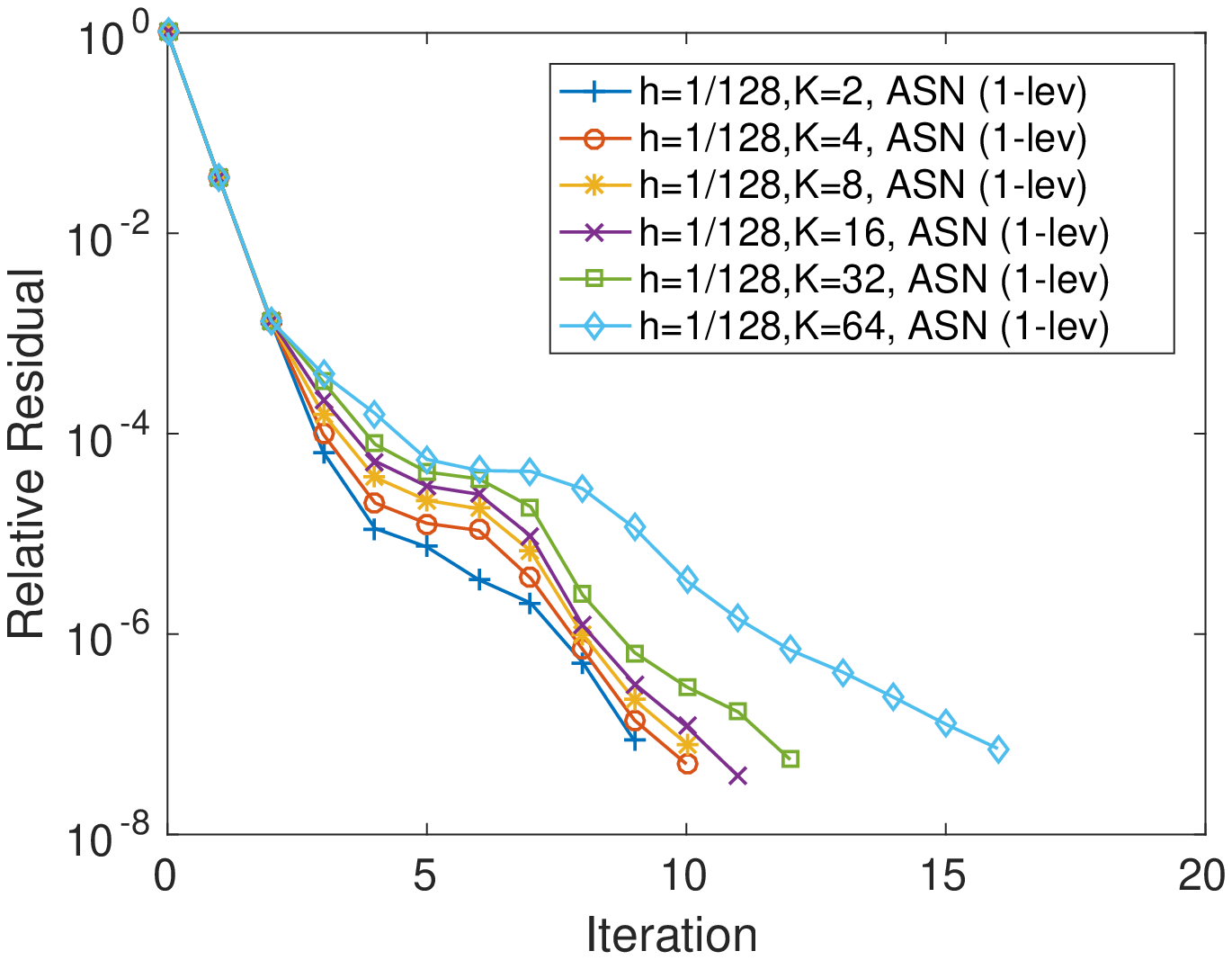}
\includegraphics[width=0.45\textwidth]{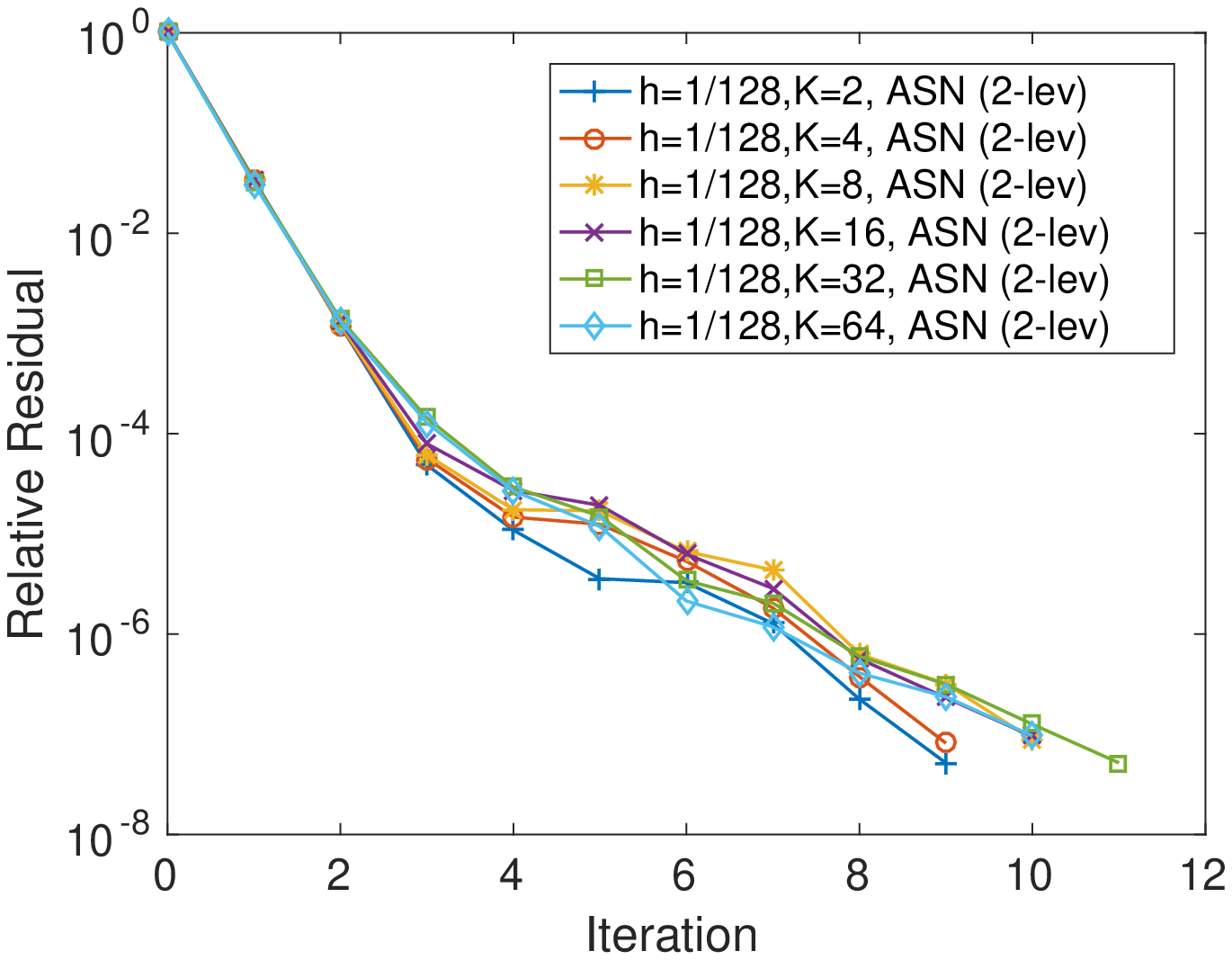}
\caption{Convergence of GMRES with 1-level and 2-level ASN preconditioners, respectively. }
\label{fig:ResASNPre2D}
\end{figure}

In 2D cases, the convergence properties of all the proposed algorithms are essentially the same as the preceding 1D numerical results,
hence we only report the simulation results of the ASN algorithm to keep our exposition concise.
In Figure \ref{fig:ResASNSolver2D}, we show the convergence of 1-level  and 2-level ASN algorithms
as stand-alone solvers, respectively. 
In Figure \ref{fig:ResASNPre2D}, we show the convergence of right-preconditioned GMRES with 1-level and 2-level ASN algorithm as preconditioners, respectively. 
Both figures are very similar to the 1D cases, where the 2-level ASN algorithm shows {a} clearly improved convergence rate {over} the 1-level ASN algorithm
{when} $K=64$. Furthermore, the required number of iterations stay roughly the same as that in the 1D cases.  
In particular, the residual norms in 2D cases are monotonically decreasing. 
 {Notice that the 2D full discretization with $h=\tau=1/128$ gives a large sparse linear system with about $128\times 128\times 128\times 4=8,388,608$ unknowns.}
\section{Conclusions}
\label{sec:conclusions}
We proposed, analyzed, and compared several different time domain decomposition algorithms for solving linear parabolic PDE control problems. 
These algorithms aim at solving the forward-and-backward optimality PDE system with parallel computers
so that the one-shot method becomes more affordable {when} dealing with 3D large-scale, time-dependent PDE optimization problems.
The proposed algorithms are shown to be convergent as stand-alone solvers and very effective as preconditioners of Krylov subspace methods. 
To achieve a more robust convergence rate with respect to the possible large number of subdomains, 
two-level algorithms based on coarse space correction techniques are also studied in our numerical tests.
Numerical results illustrate that two-level domain decomposition algorithms perform significantly better than the corresponding one-level algorithms
when the number of subdomains becomes large. 

 The parallel implementations of our proposed algorithms on massively parallel computers are currently undertaken
and the strong and weak scaling results ({with more realistic parallel CPU times}) will be reported elsewhere. Our proposed \textit{additive} Schwarz solvers and preconditioners, ASN and ASO, are anticipated to be highly parallelizable.  
Another future work is to estimate the condition number of our proposed algorithms as preconditioners to theoretically justify the observed
 {robust convergence performance of the preconditioned GMRES}. 
{Additionally, although we have restricted our discussion to linear problems, the generalization of our proposed algorithms to nonlinear problems
can be accomplished straightforwardly, provided efficient nonlinear solvers (e.g, Newton's method, nonlinear full-approximation storage (FAS) multigrid method) for solving localized nonlinear subsystems can be developed.
Notice that time evolution has no nonlinearity.
}

 {
Upon completion of our current manuscript, we noticed some recent related work \cite{Gander_2016,Felix_2016}, where
some advanced 1-level optimized Schwarz methods based on certain parameterized Robin transmission conditions are proposed for obtaining better convergence rates, 
but their presented convergence analysis is technically more involved and 
the reported results are based on the first-order accurate backward Euler scheme in time and only limited to 1-level 
additive Schwarz nonoverlapping algorithm. 
Our current work provides more different domain decomposition algorithms (both 1-level and 2-level ones) with very comprehensive numerical comparison.
}

\section*{Acknowledgments}
The authors would like to thank the three anonymous referees for their valuable comments and detailed
suggestions that have greatly contributed to improving the presentation of the paper.
This collaboration was initiated at the Institute for Mathematics and its Applications (IMA) when J. L. and Z. W. took the IMA new directions short course -- topics in control theory in May 2014. 
We thank IMA for the travel support. 
 
\bibstyle{elsarticle-harv} 
\bibliography{Ref_timedd,paraControl}

\end{document}